\documentclass{amsart}

\usepackage{amsfonts,amssymb,mathrsfs}
\usepackage{graphicx}
\usepackage[all]{xy}
\usepackage{amscd}
\usepackage{color}
\usepackage{slashbox}
\usepackage{colortbl}

\usepackage{amsmath}

\usepackage[onehalfspacing]{setspace}
\usepackage{pdflscape}
\usepackage{colortbl}
\usepackage[table]{xcolor}
\newcolumntype{U}{>{\columncolor[gray]{0.8}}c}
\usepackage{tabularx,booktabs}
\usepackage{boxedminipage}
\usepackage{graphicx}
\usepackage{rotating}
\usepackage{booktabs}
\usepackage{longtable}
\usepackage{subfigure}
\usepackage{wrapfig}
\usepackage{lipsum}
\usepackage{multirow}
\usepackage{color}
\usepackage{fancyhdr}
\usepackage{amsmath}

\newtheorem{thm}{Theorem}[section]

\newtheorem{cor}[thm]{Corollary}
\newtheorem{lem}[thm]{Lemma}
\newtheorem{prop}[thm]{Proposition}

\theoremstyle{definition}
\newtheorem{definition}[thm]{Definition}

\newtheorem{rmk}[thm]{Remark}

\setlength{\textheight}{8.70in}
\setlength{\textwidth}{5.68in}

\pagestyle{fancy}

\fancyhf{}
\lhead{\footnotesize {The same $n$-type structure of the suspension of the wedge products of the Eilenberg-MacLane spaces} }
\rhead{\thepage}


\begin{document}

\baselineskip 12.70pt

\title{The same $n$-type structure of the suspension of the wedge products of the Eilenberg-MacLane spaces}

\thanks{
This research was supported by Basic Science Research Program through the
National Research Foundation of Korea (NRF) funded by the Ministry of
Education (NRF-2015R1D1A1A09057449).}
\thanks{
~~ Tel.: +82-63-270-3367.}

\author{Dae-Woong Lee}

\address{Department of Mathematics, and Institute of Pure and Applied Mathematics, Chonbuk National University,
567 Baekje-daero, Deokjin-gu, Jeonju-si, Jeollabuk-do 54896,
Republic of Korea} \email{dwlee@jbnu.ac.kr}

\subjclass[2010]{Primary 55P15; Secondary 55S37, 55P35, 55P40, 55P62,
16T05, 16W20.} \keywords{same $n$-type, basic Whitehead product, Hilton's formula, iterated commutator, rational homotopy Lie algebra, Quasi-Lie algebra, primitive element, automorphism,  Leray-Serre spectral sequence, Pontryagin algebra, tensor algebra, Cartan-Serre theorem, Hopf-Thom theorem.}

\begin{abstract}
For a connected CW-complex, we let $SNT(X)$ be the set of all homotopy types $[Y]$ such that the
Postnikov approximations $X^{(n)}$ and $Y^{(n)}$ of $X$ and $Y$, respectively, are homotopy equivalent for all positive integers $n$.
In 1992, McGibbon and M{\o}ller (\cite[page 287]{MM}) raised the following question: Is $SNT(\Sigma \mathbb C P^\infty) = *$ or not?
In this article, we give an answer to the more generalized version of this query: The set of all the same $n$-types of the suspended wedge sum of the Eilenberg-MacLane spaces of various types of both even and odd integers is the set which consists of only one element as a single homotopy type of itself.
\end{abstract}

\maketitle

\section{Introduction}

Let $Y$ be a connected CW-space and let $Y^{(n)}$ denote its $n$th Postnikov approximation up through dimension $n$. We recall that $Y^{(n)}$ can be obtained from $Y$ by attaching cells of dimension $n+2$ and higher cells in order to achieve a new space all of whose homotopy groups are zero in dimensions greater than $n$; that is, to kill off the generators of the homotopy groups of $Y$ in dimensions above $n$.
In this case, there exist maps $f_n : Y \rightarrow Y^{(n)}, n \geq 1$ such that ${f_n}_\sharp : \pi_i(Y) \rightarrow \pi_i(Y^{(n)})$ is an isomorphism for $i \leq n$ and $\pi_i (Y^{(n)}) = 0$ for all $i \geq n+1$. Furthermore, there exist maps $p_{n+1} : Y^{(n+1)} \rightarrow Y^{(n)}$ such that
$$
\xymatrix@C=10mm @R=1mm{
K(\pi_{n+1}(Y),n+1) \ar[r]^-{} &Y^{(n+1)} \ar[r]^-{p_{n+1}} &Y^{(n)}
}
$$
is a fibration.
We say that two connected CW-spaces $Y$ and $Z$ have the {\it same
$n$-type} if the $n$th Postnikov approximations $Y^{(n)}$ and
$Z^{(n)}$ of $Y$ and $Z$, respectively, are homotopy equivalent for each $n \geq 1$.

Back in the middle of the 20th century, the following question is mainly due to J. H. C. Whitehead (\cite{W1} and \cite{W2}): If two CW-complexes are of the same $n$-type for all $n$, are they necessarily of the same homotopy type?
It is well known that the answer to this query is yes if $Y$ is either finite dimensional or if $Y$ has only a finite number of nonzero homotopy groups. However, the answer is generally no! In 1957, Adams \cite{A} showed that if $K$ is a simply connected finite noncontractible CW-complex and if $Y = \prod_{n \geq 1} K^{(n)}$ with the direct limit topology, then $Y$ and $Y \times K$ have the same $n$-type for all $n$, but they are not of the same homotopy type. We note that the Adam's example does not have finite type. In 1966, Gray \cite{G} constructed one with finite type.

In this paper, we work on the pointed homotopy category so that we do not distinguish notationally between a
base point preserving continuous map and its homotopy class. As usual,
we denote $\mathbb Z_+$, $\mathbb Z$ and $\mathbb Q$ by the sets of all positive integers, integers and rational numbers, respectively.
And we will make use of the notations $\Sigma$ and $\Omega$ for the suspension functor and the loop functor in the pointed homotopy category, respectively.

Let $SNT(X)$ denote the set of all homotopy types $[Y]$ such that the Postnikov approximations $X^{(n)}$ and $Y^{(n)}$
of $X$ and $Y$, respectively, have the same homotopy types for all integers $n \geq 1$. This is a base point set with the base point $*=[X]$.
In 1976, Wilkerson \cite[Theorem I]{WI} showed that there is a bijection of pointed sets
$SNT(X) \; \approx \; {\varprojlim_n^1} \{ \text{Aut} (X^{(n)} ) \}$,
where $X$ is a connected $CW$-complex,
${\varprojlim_n^1} (-)$ is the first derived limit of groups in the
sense \cite[page 251]{BK} of Bousfield-Kan and $\text{Aut} X^{(n)}$ is the group of homotopy classes of homotopy self-equivalences of $X^{(n)}$.
Therefore, it can be seen that the torsion subgroups of a graded homotoy group $\pi_* (X^{(n)})$ can be ignored in the ${\varprojlim_n^1}$ computation if $X$ is a space of finite type.
For $m \geq 2$, an Eilenberg-MacLane spaces $K ({\mathbb Z}, m)$ of type $({\mathbb Z}, m)$ is infinite dimensional and it has a lot of torsion elements in homotopy groups for sufficiently large $m$ and it is unique up to homotopy.
Fortunately, it is well known that the set of all the same $n$-types for the $l$th suspension of the Eilenberg-MacLane space of type $({\mathbb Z}, 2a+1)$ is trivial for $l \geq 0$; that is,
$SNT(\Sigma^l K ( {\mathbb Z}, 2a+1)) = *$.
One of the reasons of this fact is that $\Sigma^l K ( {\mathbb Z}, 2a+1)$ has a
rational homotopy type of a single $n$-dimensional rationalized sphere $S^{n}_{\mathbb Q}$, where $n = l+2a+1$.
The even dimensional case, however, is much more complicated in
that $\Sigma K( {\mathbb Z}, 2a)$ has a rational homotopy type of a
bouquet of infinitely many rationalized spheres of dimensions $2a+1, 4a+1, \ldots, 2an+1,\ldots$.
So it is natural for us to ask in the case of even integers. First, the interesting case ($a =1$) is the following question posed by McGibbon and  M{\o}ller (\cite[page 287]{MM}): Is $SNT (\Sigma K( {\mathbb Z}, 2)) = *$ or not?
The positive answer to this question was given in \cite{L1}.
Secondly, the original question was how we know whether or not the set of all same $n$-type structures of the one suspension of the Eilenberg-MacLane space $K(\mathbb Z, 2a), a \geq 2$ of type $(\mathbb Z, 2a)$ is trivial in the same $n$-type point of view.
The answer to this question was also given in \cite{L2} saying that it is the one element set which consists of a single homotopy type of itself; see \cite{L3, L4} for the same $n$-type structures of the suspension of the smash products of those spaces or a wedge of $K(\mathbb Z, 2a)$s.

More generally, what will happen in the case of the suspended wedge sum of the Eilenberg-MacLane spaces of some types of both even and odd integers?
The wedge product can be considered as the coproduct in the homotopy category of pointed spaces.
After taking suspensions or wedge products of the Eilenberg-MacLane spaces $K( {\mathbb Z}, 2a)$
and $K ( {\mathbb Z}, 2a+1)$ for $a \geq 1$ as the infinite loop
spaces, they become interesting but much more intractable for us to deal with. Therefore, it is worth mentioning what it is in the same $n$-type point of view up to homotopy.
Despite the salient results on this topics, little is known about the same $n$-type structures of the wedge product of those spaces. The main purpose of this paper is to give an answer to this query as the
generalized version of the McGibbon-M{\o}ller's original question on the same $n$-types of the suspension of the infinite complex projective space: Let $X = K( {\mathbb Z}, 2a) \vee \bigvee_{j=1}^{\infty} K( {\mathbb Z}, 2aj+1)$ be the wedge product of the
Eilenberg-MacLane spaces, where `$a$' is a positive integer. Then $SNT(\Sigma X) = \{ [\Sigma X] \}$.

As the dual notion of Hopf spaces with multiplications, co-H-spaces with comultiplications play a pivotal
role in homotopy theory. We note that a non-contractible co-H-space is the space of
Lusternik-Schnirelmann category $2$ (see \cite[Chapter X]{W} and \cite{SC}); that is, $\rm{cat} (X) = 2$ which is bigger than or equal to the $R$-cup length of the cohomology of this space with coefficients in a commutative ring $R$. One of the most important classes of co-H-spaces consists of all $n$-spheres for $n\ge 1$, a wedge of spheres (or co-H-spaces), and the suspensions of a pointed space which we will deal with in this article.

The paper is organized as follows: In Section 2, we construct self-maps, (pure and hybrid) iterated commutators and homotopy self-equivalences by using the suspension and loop structures, and describe the fundamental results of those maps. In Section 3, we consider the iterated Samelson (or Whitehead) products  which are rationally nonzero homotopy classes in the homotopy groups module torsions, and show that there exist (pure and hybrid) iterated commutators on the suspension and loop structures whose phenomena are exactly the same as the types of the iterated Samelson (or Whitehead) products as basic ingredients on this paper. Finally, In Section 4, we describe the main theorem of this paper and make use of all the results in Sections 2 and 3 for the proof of the theorem.

\medskip

\section{Self-maps and iterated commutators}\label{section2}

In this section, we consider the self-maps of the suspended wedge sum of the Eilenberg-MacLane spaces and the iterated commutators of those self-maps in order to construct homotopy self-equivalences and to find out various properties of those self-maps. We fix the following notations that are used throughout this paper:
\begin{itemize}
\item Let $X: = K( {\mathbb Z}, 2a) \vee \bigvee_{j=1}^{\infty} K( {\mathbb Z}, 2aj+1)$ denote a CW-space on the wedge product of the
      Eilenberg-MacLane spaces, where `$a$' is a positive integer.
\item $X_n$ means the $n$-skeleton of $X$.
\item $C : \Omega \Sigma X \wedge  \Omega \Sigma X \rightarrow \Omega \Sigma X$ is a commutator map; that is,
$$
C(f \wedge g) = f \cdot g \cdot f^{-1} \cdot g^{-1},
$$
where the multiplication is originated from the loop structure on $\Omega \Sigma X$ ($\simeq JX$ as an $X$-cellular space), and $f^{-1}$ and $g^{-1}$ are the loop inverses of $f$ and $g$, respectively.
\end{itemize}

We note that $X$ has a CW-decomposition as follows:
$$
\begin{array}{lll}
X &= (S^{2a} \cup T^1_1 \cup_{\alpha_1} e^{4a}  \cup T^1_2 \cup_{\alpha_2} e^{6a}  \cup \cdots
\cup T^1_{i-1} \cup_{\alpha_{i-1}}e^{2ai} \cup T^1_{i} \cup_{\alpha_i}e^{2a(i+1)} \cup \cdots) \\
  &\qquad \vee (S^{2a+1} \cup T_1^2 ) \vee (S^{4a+1} \cup T_2^2 ) \vee \cdots \vee (S^{2aj+1} \cup T_j^2 ) \vee
  (S^{2a(j+1)+1} \cup T_{j+1}^2 ) \vee \cdots .
\end{array}
$$
Here
\begin{itemize}
\item [{\rm (1)}] $\alpha_i : S^{2a(i+1)-1} \rightarrow X_{2a(i+1)-1}$
is an attaching map for $i=1,2,3, \ldots$;
\item [{\rm (2)}] $T^1_i$ and $T_j^2$ denote the other cells for torsion subgroups or the Moore spaces whose homology groups are finite for
      $i, j = 1,2,3, \ldots$; and
\item [{\rm (3)}] $e^{2ai}$ is the $2ai$-cell for $i = 2,3, 4 \ldots$.
\end{itemize}

In order to define self-maps of $\Sigma X$, we first consider the base point preserving maps as follows:
\begin{itemize}
\item [$\diamond$] Let $\hat \varphi_1 : X \rightarrow \Omega \Sigma X$ be the canonical inclusion (or the James map).
\item [$\diamond$] We consider cofibrations
\begin{equation}\tag{2.1}
\xymatrix@C=3.0pc{
X_{2ai-1} \,\, \ar@{^{(}->}[r]^-{\iota_i} &X \ar[r]^-{p_{\iota_i} } & X/X_{2ai-1}
}
\end{equation}
and
\begin{equation}\tag{2.2}
\xymatrix@C=3.0pc{
X_{2aj} \,\, \ar@{^{(}->}[r]^-{\zeta_j} &X \ar[r]^-{p_{\zeta_j}} & X/X_{2aj},
}
\end{equation}
where
 \begin{enumerate}
    \item [{\rm (1)}] $\iota_i$ and $\zeta_j$ are inclusions; and
    \item [{\rm (2)}] $p_{\iota_i}$ and $p_{\zeta_j}$ are projections for $i = 2,3,4,\ldots $ and $j = 1,2,3,\ldots$ (we note that $X_{2a-1} = *$).
 \end{enumerate}
\item [$\diamond$] We also consider the following exact sequences
\begin{equation}\tag{2.1-a}
\xymatrix{
[ X/X_{2ai-1} ,\Omega \Sigma X] \ar[r]^-{p_{\iota_i}^{\sharp}} &[X, \Omega \Sigma X] \ar[r]^-{\iota_i^{\sharp}}
&[ X_{2ai-1}, \Omega \Sigma X]
}
\end{equation}
and
\begin{equation}\tag{2.2-a}
\xymatrix{
[ X/X_{2aj} ,\Omega \Sigma X] \ar[r]^-{p_{\zeta_j}^{\sharp}} &[X, \Omega \Sigma X] \ar[r]^-{\zeta_j^{\sharp}}
&[ X_{2aj}, \Omega \Sigma X]
}
\end{equation}
induced by the cofibrations above for $i = 2,3,4,\ldots $ and $j = 1,2,3,\ldots$.
\item [$\diamond$] We now take essential maps
$$
\xymatrix@C=1.5pc{
\hat \varphi_{i} : X \ar[r]^{} &\Omega \Sigma X }
$$
and
$$
\xymatrix@C=1.5pc{
\hat \psi_{j} : X \ar[r]^{} &\Omega \Sigma X }
$$
in the groups ${\rm ker}(\iota_i^{\sharp}) \subset [X, \Omega \Sigma X]$
and
${\rm ker}(\zeta_j^{\sharp}) \subset [X, \Omega \Sigma X]$, respectively for $i = 2,3,4,\ldots $ and $j = 1,2,3,\ldots$.
\end{itemize}

From the constructions of $\hat \varphi_{i} : X \rightarrow \Omega \Sigma X$ and $\hat \psi_{j} : X \rightarrow \Omega \Sigma X $ above, we note that there exist homotopy classes $\bar \varphi_{i}$ and $\bar \psi_{i}$ in the groups
$[X/X_{2ai-1} , \Omega \Sigma X]$ and $[X/X_{2aj-1}, \Omega \Sigma X]$, respectively, such that following diagrams
\begin{equation}\tag{2.3}
\xymatrix@C=8mm @R=8mm{
X_{2ai-1} \ar@{^{(}->}[r]^-{\iota_i} \ar[dr]_-{\hat \varphi_{i}|_{X_{2ai-1}}} &X  \ar[r]^-{p_{\iota_i}} \ar[d]_-{\hat \varphi_{i}} &X/X_{2ai-1}
\ar[dl]^-{\bar \varphi_{i}}  \\
   &\Omega \Sigma X
}
\end{equation}
and
\begin{equation}\tag{2.4}
\xymatrix@C=8mm @R=8mm{
X_{2aj} \ar@{^{(}->}[r]^-{\zeta_j} \ar[dr]_-{\hat \psi_{j}|_{X_{2aj}}} &X  \ar[r]^-{p_{\zeta_j}} \ar[d]_-{\hat \psi_{j}} &X/X_{2aj}
\ar[dl]^-{\bar \psi_{j}}  \\
   &\Omega \Sigma X
}
\end{equation}
are commutative up to homotopy for $i = 2,3,4,\ldots $ and $j = 1,2,3,\ldots$.

The following tables (Table 1 and Table 2) indicate that the restrictions $\hat \varphi_{i}|_{X_{2ai-1}} : X_{2ai-1} \rightarrow \Omega \Sigma X$ and $\hat \psi_{j}|_{X_{2aj}} : X_{2aj} \rightarrow \Omega \Sigma X$ of maps $\hat \varphi_{i} : X \rightarrow \Omega \Sigma X$ and
$\hat \psi_{j} : X \rightarrow \Omega \Sigma X$, respectively, to the corresponding skeletons are inessential for
$i = 1,2,3,\ldots $ and $j = 1,2,3,\ldots$.
\renewcommand{\tabcolsep}{10pt}
\renewcommand{\arraystretch}{1.5}
\begin{table}[h!]
  \begin{center}
    \caption{Maps $\hat \varphi_{i} : X \rightarrow \Omega \Sigma X$ and $\hat \varphi_{i}|_{X_{2ai-1}} : X_{2ai-1} \rightarrow \Omega \Sigma X$ for $i = 1,2,3,\ldots $}
    \label{tab:table1}
\begin{tabularx}{0.980\linewidth}
{|>{\columncolor{lightgray}}c|X|X|X|X|X|X|}
\hline
Maps   &    $\hat \varphi_{1}$   &   $\hat \varphi_{2}$   &    $\hat \varphi_{3}$ & $\cdots$ &$\hat \varphi_{i}$ & $\cdots$ \\
	\hline
Skeletons & $X_{2a-1}$ & $X_{4a-1}$  & $X_{6a-1}$  &$\cdots$ &$X_{2ai-1}$ &$\cdots$\\
\hline
Inessential maps & $\hat \varphi_{1}|_{X_{2a-1}}$ & $\hat \varphi_{2}|_{X_{4a-1}}$  & $\hat \varphi_{3}|_{X_{6a-1}}$  &$\cdots$ &$\hat \varphi_{i}|_{X_{2ai-1}}$ &$\cdots$\\
	\hline
\end{tabularx}
\end{center}
\end{table}

\renewcommand{\tabcolsep}{10pt}
\renewcommand{\arraystretch}{1.5}
\begin{table}[h!]
  \begin{center}
    \caption{Maps $\hat \psi_{j} : X \rightarrow \Omega \Sigma X$ and $\hat \psi_{j}|_{X_{2aj}} : X_{2aj} \rightarrow \Omega \Sigma X$ for $j = 1,2,3,\ldots $}
    \label{tab:table1}
\begin{tabularx}{0.980\linewidth}
{|>{\columncolor{lightgray}}c|X|X|X|X|X|X|}
\hline
Maps   &    $\hat \psi_{1}$   &   $\hat \psi_{2}$   &    $\hat \psi_{3}$ & $\cdots$ &$\hat \psi_{j}$ & $\cdots$ \\
	\hline
Skeletons & $X_{2a}$ & $X_{4a}$  & $X_{6a}$  &$\cdots$ &$X_{2aj}$ &$\cdots$\\
\hline
Inessential maps & $\hat \psi_{1}|_{X_{2a}}$ & $\hat \psi_{2}|_{X_{4a}}$  & $\hat \psi_{3}|_{X_{6a}}$  &$\cdots$ &$\hat \psi_{j}|_{X_{2aj}}$ &$\cdots$\\
	\hline
\end{tabularx}
\end{center}
\end{table}

Let $Y = S^{n_1} \vee S^{n_2 } \vee \cdots \vee S^{n_k}$ and let $\rho_j: S^{n_j} \longrightarrow Y$ be the canonical inclusion
for $j=1,2, \ldots,k$.
We define and order the {\it basic Whitehead products} on $Y$ as
follows: Basic Whitehead products of weight 1 are $\rho_1,\rho_2, \ldots ,
\rho_k$ which are ordered by $\rho_1 <\rho_2 <\cdots < \rho_k$.
Assume that the basic Whitehead products of weight $< n$ have already been defined and
ordered so that if $r<s <n$, any basic Whitehead product of weight $r$ is less
than all basic Whitehead products of weight $s$. By induction, we define a basic Whitehead
product of weight $n$ by a basic Whitehead product $[A,B]$, where $A$ is a
basic Whitehead product of weight $\alpha$ and $B$ is a basic Whitehead product of weight
$\beta$, $\alpha + \beta =n$, $A<B$, and if $B$ is a basic Whitehead product $[C,D]$ of
basic Whitehead products $C$ and $D$, then $C\le A$. The basic Whitehead products of
weight $n$ are ordered arbitrarily among themselves and are
greater than any basic Whitehead product of weight $<n$.  Suppose $\rho_j$
occurs $l_j$ times, $l_j \ge 1$ in the basic Whitehead product
$w_v$.  Then the {\it height} $h_v$ of the basic Whitehead product $w_v$ is
$\sum l_j (n_j-1)+1$.

We describe the beautiful Hilton's formula \cite{H} in terms of the basic Whitehead products as follows:

\begin{thm} \label{H} Let $w_1, w_2, \ldots , w_v, \ldots $ be the basic Whitehead products of
$$
Y = S^{n_1} \vee S^{n_2} \vee \cdots \vee S^{n_k}
$$
with the height $h_v$ of $w_v$, where $v =1, 2, 3,\ldots$. Then we obtain
$$
\pi_m (Y) \cong \bigoplus _{v=1}^{\infty} \pi_m(S^{h_v})
$$
for every $m$. The isomorphism $\theta : \bigoplus _{v=1}^{\infty} \pi_m (S^{h_v}) \to \pi_m (Y)$ is defined by
$$
\theta |_{\pi_m(S^{h_v})} = w_{{v}_*} :  \pi_m(S^{h_v}) \longrightarrow
\pi_m (Y),
$$
where $w_v$ is the basic Whitehead product.
\end{thm}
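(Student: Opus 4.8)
The plan is to prove Hilton's formula by looping $Y$ and recognizing $\Omega Y$ as a weak infinite product of the loop spaces $\Omega S^{h_v}$, one factor for each basic Whitehead product $w_v$. Since each $n_j\ge 1$ and suspension commutes with wedges, I would first write $Y=\Sigma W$ with $W=S^{n_1-1}\vee\cdots\vee S^{n_k-1}$; in every application in this paper the spheres are simply connected, so we may assume $n_j\ge 2$ and hence $W$ connected. Because $\pi_m(\Omega Y)\cong\pi_{m+1}(Y)$ and $\pi_m(\Omega S^{h_v})\cong\pi_{m+1}(S^{h_v})$, a homotopy equivalence $\Omega Y\simeq\prod_v\Omega S^{h_v}$ would at once give $\pi_{m+1}(Y)\cong\bigoplus_v\pi_{m+1}(S^{h_v})$ for all $m$, which is the asserted isomorphism after reindexing. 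Thus the entire problem reduces to constructing this splitting and matching the factors with the basic products.

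The homological engine is the Bott--Samelson theorem: for the connected wedge $W$ the Pontryagin ring $H_*(\Omega\Sigma W)$ is the tensor algebra $T(\tilde H_*(W))$, where $\tilde H_*(W)$ is free with one generator $x_j$ in degree $n_j-1$. The tensor algebra $T(V)$ is the universal enveloping algebra of the free graded Lie algebra $L(V)$, so by the Poincar\'e--Birkhoff--Witt theorem it decomposes, as a coalgebra, into a tensor product over any basis of $L(V)$. A Hall basis of $L(V)$ is indexed exactly by the basic products, and the bracket corresponding to $w_v$ has internal degree $\sum l_j(n_j-1)=h_v-1$, which is precisely the degree of the polynomial generator of $H_*(\Omega S^{h_v})\cong H_*(\Omega\Sigma S^{h_v-1})=T(\tilde H_*(S^{h_v-1}))$. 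This is the algebraic shadow of the splitting we are after.

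I would then realize this algebra geometrically. Each basic Whitehead product is a map $w_v\colon S^{h_v}\to Y$; looping it and using the James multiplication on $\Omega Y$ assembles a map $\Phi\colon\prod_v\Omega S^{h_v}\to\Omega Y$ from the weak product. By the computation above together with the K\"unneth theorem, $\Phi$ carries the tensor product of the factor homologies isomorphically onto $T(\tilde H_*(W))=H_*(\Omega Y)$, the factor generators mapping to the free Lie generators; hence $\Phi$ is a homology isomorphism. Both source and target are connected H-spaces, and therefore nilpotent, so a homology isomorphism is a weak homotopy equivalence. Applying $\pi_m$ and the reindexing of the first paragraph yields the isomorphism, and tracing $\Phi$ through shows it is exactly $\theta|_{\pi_m(S^{h_v})}=(w_v)_*$, as claimed.

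The main obstacle is the bookkeeping that makes the chain ``basic products $\leftrightarrow$ Hall basis of $L(V)$ $\leftrightarrow$ tensor factors'' precise and degree-compatible. Two points need care. First, signs: with integral or field coefficients the relevant object is a \emph{graded} (quasi-)Lie algebra, so PBW must be used in its signed form, and the parity of each $n_j$ governs whether a generator is even or odd---this is exactly where the even/odd distinction emphasized elsewhere in the paper enters the homology computation. Second, convergence: $\prod_v\Omega S^{h_v}$ is an infinite weak product, so the equivalence must be checked skeleton by skeleton, using that only finitely many heights $h_v$ lie below any fixed dimension. Once these are settled, matching the ordering rule ``$C\le A$'' in the definition of basic products with an honest additive Hall basis of the free Lie algebra is the standard combinatorial lemma, and the homology identification goes through unobstructed.
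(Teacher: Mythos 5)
The paper does not prove this statement at all: Theorem 2.1 is quoted as known background, with the proof deferred to Hilton's paper \cite{H}, so there is no internal argument to compare yours against. Your proposal is a correct outline of the standard space-level proof (Milnor's refinement, usually called the Hilton--Milnor theorem): loop the wedge, map the weak product $\prod_v^{w}\Omega S^{h_v}$ in by loop-multiplying the looped basic Whitehead products, verify a homology isomorphism via Bott--Samelson plus a Hall-basis decomposition of the tensor algebra, and conclude by the Whitehead theorem for simple spaces; the identification $\theta|_{\pi_m(S^{h_v})}=(w_v)_*$ then falls out of adjunction naturality. This is a genuinely different route from Hilton's original induction on homotopy groups that the citation points to, and it buys a stronger, space-level splitting that meshes well with the tools this paper actually uses (Pontryagin/tensor algebras, Samelson products, primitivity, Cartan--Serre). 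Two cautions, both of which you flag, deserve to be made precise. First, literal PBW for graded Lie algebras is the wrong integral statement here: over $\mathbb{Z}$ the odd-degree classes force the quasi-Lie framework, and what the argument actually needs is the lemma that the ordered monomials $z_{v_1}^{a_1}\cdots z_{v_r}^{a_r}$ (with $v_1<\cdots<v_r$ and $a_i\ge 1$) in the basic graded commutators form a $\mathbb{Z}$-basis of $T(\tilde H_*(W))$; note that each factor must be the full tensor algebra $T(z_v)\cong H_*(\Omega S^{h_v})$, with arbitrary powers of the odd-degree $z_v$ allowed, which is exactly why the definition of basic products excludes $[A,A]$. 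This lemma is standard (provable by Lazard elimination and induction) but it is not the field-coefficient PBW you invoke. Second, the weak product must be treated as the colimit of its finite subproducts; since each $n_j\ge 2$, only finitely many basic products have height below any fixed bound, so homology and homotopy both commute with that colimit. With these two points pinned down, and with the hypothesis $n_j\ge 2$ that you impose (which is genuinely necessary --- the statement as literally written would fail if some $n_j=1$ --- and which holds in every application in this paper), your sketch completes to a full proof.
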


From a CW-decomposition of $X: = K( {\mathbb Z}, 2a) \vee \bigvee_{j=1}^{\infty} K( {\mathbb Z}, 2aj+1)$, we note that
$$
X/X_{2ai-1} = S^{2ai} \cup \text{higher cells whose dimensions are bigger than}~ 2ai
$$
and
$$
X/X_{2aj} = S^{2aj+1} \cup \text{higher cells whose dimensions are bigger than}~ 2aj +1
$$
for $i, j = 1,2,3,\ldots$.


\begin{definition} \label{def2.2}
We define the rationally nonzero homotopy classes $\hat x_{i}$ and $\hat x_{j}$ of the homotopy groups modulo torsion subgroups $\pi_{2ai}(\Omega \Sigma X) /\rm{torsion}$ and $\pi_{2aj+1}(\Omega \Sigma X) /\rm{torsion}$ by
$$
\xymatrix@C=7mm @R=5mm{
\hat x_{1} = \hat \varphi_{1} |_{S^{2a}} : S^{2a} \ar[r] &\Omega \Sigma X;
}
$$
$$
\xymatrix@C=7mm @R=5mm{
\hat x_{i} = \bar \varphi_{i} |_{S^{2ai}} : S^{2ai} \ar[r] &\Omega \Sigma X;
}
$$
and
$$
\xymatrix@C=7mm @R=5mm{
\hat y_{j} = \bar \psi_{j} |_{S^{2aj+1}} : S^{2aj+1} \ar[r] &\Omega \Sigma X,
}
$$
respectively, for $i = 2,3,4,\ldots $ and $j = 1,2,3,\ldots$.
\end{definition}

We recall that the suspension $\Sigma$ gives rise to a covariant functor from the pointed homotopy category of pointed spaces to itself. An important and fundamental property of this functor is that it is a left adjoint to the loop functor $\Omega$ taking a pointed space $X$ to its loop space $\Omega X$; that is $(\Sigma, \Omega)$ is an adjoint pair of covariant functors on the pointed homotopy category. Therefore, we can define the following:

\begin{definition}
We define the self-maps $\varphi_{i} : \Sigma X \rightarrow \Sigma X$ and $\psi_{j} : \Sigma X \rightarrow \Sigma X$
as the adjointness of $\hat \varphi_{i} : X \rightarrow \Omega \Sigma X$ and $\hat \psi_{j} : X \rightarrow \Omega \Sigma X$, respectively. Similarly, we define maps $x_{i} : S^{2ai+1} \rightarrow \Sigma X$ and $y_{j} : S^{2aj+2} \rightarrow \Sigma X$ as the adjointness of  $\hat x_{i} : S^{2ai} \rightarrow \Omega \Sigma X$ and $\hat y_{j} : S^{2aj+1} \rightarrow \Omega \Sigma X$, respectively for $i, j = 1,2,3,\ldots$.
\end{definition}

We now give an order of the basic Whitehead products of weight $1$ on the rationally nonzero homotopy classes $x_{i}$ and $x_{j}$ of the graded homotopy groups modulo torsion subgroups, $\pi_* (\Sigma X)/\rm{torsion}$, as follows:
$$
x_1 < y_1 <x_2 < y_2 < \ldots < x_i < y_i < \ldots
$$
for $i = 1,2,3, \ldots$. We then define and order the basic Whitehead products of weight $n$ on $\pi_* (\Sigma X)/\rm{torsion}$ as just mentioned above.

Based on the above self-maps $\varphi_{i} : \Sigma X \rightarrow \Sigma X$ and $\psi_{j} : \Sigma X \rightarrow \Sigma X$ for $i, j = 1,2,3,\ldots$, we now construct new self-maps as follows.

\begin{definition}
We define the {\it commutator} of self-maps $f_{s_{t}} : \Sigma X \rightarrow \Sigma X, t =1,2$ on $\Sigma X$ by
$$
[f_{s_1}, f_{s_2}]_c = f_{s_1} + f_{s_2} -f_{s_1} -f_{s_2} : \Sigma X \longrightarrow \Sigma X
$$
where $f_{s_1} = \varphi_{s_1}$ or $\psi_{s_1}$ and  $f_{s_2} = \varphi_{s_2}$ or $\psi_{s_2}$ for $s_1, s_2 = 1,2,3,\ldots$, and the addition and subtraction are derived from the suspension $\Sigma X$ of $X$.
Inductively, we also define the {\it iterated commutator}
$$
[f_{s_l}, [f_{s_{l-1}},\ldots,[f_{s_{1}}, f_{s_2}]_c \ldots]_c]_c : \Sigma X \longrightarrow \Sigma X
$$
of self-maps $f_{s_{t}} : \Sigma X \rightarrow \Sigma X, t =1,2,\ldots,l$ on the suspension structure again, where $f_{s_t} = \varphi_{s_t}$ or $\psi_{s_t}$ for $s_t = 1,2,3,\ldots$.
\end{definition}

\begin{definition}
The iterated commutator
$$
[f_{s_l}, [f_{s_{l-1}},\ldots,[f_{s_{1}}, f_{s_2}]_c \ldots]_c]_c : \Sigma X \longrightarrow \Sigma X
$$
is said to be be {\it pure} if each map $f_{s_t}$, $t=1,2,\ldots,l$, on the iterated commutator appears only as either $\varphi_{s_i}$ or $\psi_{s_j}$ for $i,j =1,2,\ldots,l$.
It is said to be {\it hybrid} if it contains at least one $\varphi_{s_i}$ and at least one $\psi_{s_j}$ on the iterated commutator for $s_i , s_j = 1,2,3,\ldots$ and $i,j \in \{1,2,\ldots,l\}$.
\end{definition}

Since the set $[\Sigma X, \Sigma X]$ of homotopy classes of pointed maps from $\Sigma X$ to itself has just a group structure (not necessarily abelian), the iterated commutators of self-maps above are not necessarily inessential and thus they are worth considering; see \cite{Mo} for the infinite complex projective space. To top it all off, the iterated commutators do make sense because there are infinitely many non-zero cohomology cup products in $X$ so that
the Lusternik-Schnirelmann category is infinite. Moreover, it is well known that the $n$-fold iterated commutator is of finite order if and only if all $n$-fold cohomology cup products of rational cohomology classes of a space are all zero; see \cite{M} and \cite[Theorem 5]{MC}.

\begin{definition}\label{self-map}
Let $1 : \Sigma X \rightarrow \Sigma X$ be the identity map on $\Sigma X$. Then by using the notations above, we define the self-maps
$$
1 + [f_{s_l}, [f_{s_{l-1}},\ldots,[f_{s_{1}}, f_{s_2}]_c \ldots]_c]_c : \Sigma X \longrightarrow \Sigma X ,
$$
where the addition $+$ is the suspension one.
\end{definition}

\begin{rmk}
We note that the self-maps on $\Sigma X$ in Definition \ref{self-map}
are indeed the homotopy self-equivalences of $\Sigma X$ by the Whitehead theorem.
\end{rmk}

We refer to the Arkowitz's works \cite{M1, M2} and the Rutter's book \cite{R} for a survey of the vast literature about homotopy self-equivalences and related topics; see also \cite{ML0}, \cite{ML} and \cite{ML1}.

Let $X_{\mathbb Q}$ be the rationalization (i.e., $0$-localization) of $X$ \cite{HMR}. Then it can be seen that the total rational homotopy group
$$
\hat {\mathcal{L}} = (\pi_{*} (\Omega \Sigma X_{\mathbb{Q}}), < ~,~>) = (\pi_{*} (\Omega \Sigma X ) \otimes \mathbb{Q}, < ~,~>)
$$
of $\Omega \Sigma X_{\mathbb{Q}}$ becomes a graded Lie algebra over the field $Bbb Q$ of rational numbers with the Samelson products $< ~,~>$ which is called the {\it rational homotopy Lie algebra} of $\Sigma X$.
Similarly, we denote $\hat {\mathcal L}_{\leq 2an}$ by the subalgebra of $\hat {\mathcal{L}}$ generated by all free algebra generators whose degrees are less than or equal to $2an$; that is,
$$
\hat {\mathcal{L}}_{\leq 2an} =  (\pi_{\leq 2an} (\Omega \Sigma X_{\mathbb{Q}}), < ~,~>) = (\pi_{\leq 2an} (\Omega \Sigma X ) \otimes \mathbb Q, < ~,~>)
$$
with rational homotopy generators $\hat \chi_1 , \hat \eta_1, \hat \chi_2 , \hat \eta_2, \ldots, \hat \eta_{i-1}, \hat \chi_i , \hat \eta_i, \ldots, \hat \chi_{n-1}, \hat \eta_{n-1}, \hat \chi_n$, where
$\hat \chi_i : S^{2ai}\rightarrow \Omega \Sigma X_{\mathbb Q}$ and $\hat \eta_j : S^{2aj+1}\rightarrow \Omega \Sigma X_{\mathbb Q}$
are the compositions $r \circ \hat x_i$ and  $r \circ \hat y_j$ of the rationally nonzero indecomposable elements
$\hat x_i : S^{2ai} \rightarrow \Omega \Sigma X$ of $\pi_{2ai} (\Omega \Sigma X)/{\rm torsion}$
and
$\hat y_j : S^{2aj+1} \rightarrow \Omega \Sigma X$
of $\pi_{2aj+1} (\Omega \Sigma X)/{\rm torsion}$, respectively, for $i= 1,2,\ldots, n$ and $j= 1,2,\ldots, n-1$,
with the topological rationalization map $r : \Omega \Sigma X \rightarrow \Omega \Sigma X_{\mathbb Q}$. The adjoint pair of functors $(\Sigma, \Omega)$ on the pointed homotopy category makes us think about the following graded quasi-Lie algebra
$$
\mathcal{L}_{\leq 2an+1} =  (\pi_{\leq 2an+1} (\Sigma X ) \otimes \mathbb Q, [ ~,~])
$$
which is called the {\it Whitehead algebra} with the rational homotopy generators $\chi_1 , \eta_1, \chi_2 ,  \eta_2, \ldots,$  $\eta_{i-1}, \chi_i , \eta_i, \ldots, \chi_{n-1},  \eta_{n-1}, \chi_n$ and the Whitehead products $[~ , ~]$ as brackets, where
$\chi_i : S^{2ai+1}\rightarrow \Sigma X_{\mathbb Q}$
and
$\eta_j : S^{2aj+2}\rightarrow \Sigma X_{\mathbb Q}$
are obtained by the adjointness of the rationally nonzero indecomposable elements $\hat \chi_i$ and $\hat \eta_j$ above, respectively,
for $i= 1,2,\ldots, n$ and $j= 1,2,\ldots, n-1$.

Let ${\rm Aut}(Y)$ be the group of pointed homotopy classes of all homotopy self-equivalences of a connected CW-space $Y$.
Then, in rational homotopy theory, the automorphism of the Whitehead algebras
$$
\kappa : \mathcal{L}_{\leq 2an+1} \longrightarrow \mathcal{L}_{\leq 2an+1},
$$
defined by $\kappa (\chi_i ) = \chi_i + {\rm decomposables}$ and $\kappa (\eta_j ) = \eta_j + {\rm decomposables}$
are corresponding to our homotopy self-equivalences
$$
1 + [f_{s_l}, [f_{s_{l-1}},\ldots,[f_{s_{1}}, f_{s_2}]_c \ldots]_c]_c \in  {\rm Aut}(\Sigma X),
$$
inducing the identity automorphism $1_{H_* (\Sigma X; \mathbb Q)}$ on homology with rational coefficients in the
group of automorphisms ${\rm Aut} (H_* (\Sigma X; \mathbb Q))$,
where $f_{s_{t}}$ are self-maps of $\Sigma X$ with $f_{s_t} = \varphi_{s_t}$ or $\psi_{s_t}, t= 1,2,\ldots l$ and $s_t = 1,2,3,\ldots$.

\begin{rmk}
In general, if $A$ is a finite $CW$-complex or locally compact, and if $f,g : A \rightarrow \Omega B$ are the pointed preserving continuous maps satisfying $f \vert_{A_s} \simeq c_{b_0}$ and $g \vert_{A_t} \simeq c_{b_0}$, where $c_{b_0}$ is the constant loop at $b_0 \in B$,
then it can be shown that the restriction of the commutator $[f, g] : A \rightarrow \Omega B$ to the $n$-skeleton of $A$ is inessential, where
$n = s+t+1$.
\end{rmk}

Thus, by the constructions of self-maps on $\Sigma X$ as adjoint maps (see Tables 1 and 2) and by using induction, we have the following (see also \cite[Lemma 3.4]{L3}):

\begin{rmk} \label{Res} {\rm(a)} Let the iterated commutator
$F: \Sigma X \rightarrow \Sigma X$ be pure. If
$$
F_1 = [\varphi_{s_{l_1}}, [\varphi_{s_{l_1 -1}},\ldots,[\varphi_{s_{1}}, \varphi_{s_2}]_c \ldots]_c]_c : \Sigma X \rightarrow \Sigma X,
$$
then
$$
F_1 |_{\Sigma X_{k_1}} \simeq * : \Sigma X_{k_1} \hookrightarrow \Sigma X
$$
for $k_1 = 2a(s_1+s_2+\cdots+s_{l_1})$.
If
$$
F_2 = [\psi_{s_{l_2}}, [\psi_{s_{l_2 -1}},\ldots,[\psi_{s_{1}}, \psi_{s_2}]_c \ldots]_c]_c : \Sigma X \rightarrow \Sigma X,
$$
then
$$
F_2 |_{\Sigma X_{k_2}} \simeq * : \Sigma X_{k_2} \hookrightarrow \Sigma X
$$
for $k_2 = 2a(s_1+s_2+\cdots+s_{l_2})+l_2$.

{\rm(b)} Let the iterated commutator
$$
F = [f_{s_l}, [f_{s_{l-1}},\ldots,[f_{s_{1}}, f_{s_2}]_c \ldots]_c]_c : \Sigma X \rightarrow \Sigma X
$$
be hybrid which contains both the $t$-times of $\varphi_{s_{i}}$ and the $(l-t)$-times of $\psi_{s_{j}}$, where $1 \leq t \leq l-1$. Then
$$
F|_{\Sigma X_{k}} \simeq * : \Sigma X_{k} \hookrightarrow \Sigma X
$$
for $k = 2a(s_1+s_2+\cdots+s_l)+l-t$.
\end{rmk}

Let
$$
i_1 : Y \rightarrow Y \times Y
$$
and
$$
i_2 : Y \rightarrow Y \times Y
$$
be the first and second inclusion maps; that is, $i_1(y) = (y, y_0)$ and $i_2(y) = (y_0, y)$. Then a {\it primitive element} is an element $p \in H_*(Y)$ that satisfies
$$
\Delta_* (p) = {i_1}_* (p) + {i_2}_* (p) = p \otimes 1 + 1 \otimes p,
$$
where $\Delta : Y \rightarrow Y \times Y$ is the diagonal map. It is very easy to show that each element of $H_n(S^n)$ is primitive for $n \geq 1$.

\begin{prop} The images of $\hat \varphi_{i_*} : H_{2ai} (X) \rightarrow H_{2ai} (\Omega \Sigma X)$ and $\hat \psi_{j_*} : H_{2aj+1} (X) \rightarrow H_{2aj+1} (\Omega \Sigma X)$ are primitive for each $i, j =1,2,3, \ldots$.
\end{prop}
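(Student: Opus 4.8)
The plan is to exploit the two homotopy factorizations already recorded in diagrams (2.3) and (2.4). Because the restriction $\hat \varphi_i|_{X_{2ai-1}}$ is inessential (Table 1), the map $\hat \varphi_i$ factors up to homotopy as $\hat \varphi_i \simeq \bar \varphi_i \circ p_{\iota_i}$ through the quotient $X/X_{2ai-1}$; symmetrically, $\hat \psi_j \simeq \bar \psi_j \circ p_{\zeta_j}$ through $X/X_{2aj}$ by Table 2. Passing to homology gives $\hat \varphi_{i_*} = \bar \varphi_{i_*} \circ (p_{\iota_i})_*$, so the image of $\hat \varphi_{i_*}$ on $H_{2ai}(X)$ is contained in the image of $\bar \varphi_{i_*} : H_{2ai}(X/X_{2ai-1}) \to H_{2ai}(\Omega \Sigma X)$, and likewise for $\hat \psi_{j_*}$. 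It therefore suffices to prove two things: that every degree-$2ai$ (resp. degree-$(2aj+1)$) homology class of the relevant quotient is primitive, and that $\bar \varphi_i$ (resp. $\bar \psi_j$) carries primitives to primitives.

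The first point is a pure connectivity observation. From the CW-decomposition of $X$ displayed just before Definition \ref{def2.2}, the space $X/X_{2ai-1}$ has cells only in dimensions $\geq 2ai$, so that $\tilde H_k(X/X_{2ai-1}) = 0$ for $0 < k < 2ai$; similarly $X/X_{2aj}$ has cells only in dimensions $\geq 2aj+1$. Fixing $w \in H_{2ai}(X/X_{2ai-1})$ and expanding $\Delta_*(w)$ through the Künneth decomposition of $H_{2ai}(X/X_{2ai-1} \times X/X_{2ai-1})$, the counit identity forces the $H_0 \otimes H_{2ai}$- and $H_{2ai} \otimes H_0$-components to equal $1 \otimes w$ and $w \otimes 1$; any remaining tensor term $\alpha \otimes \beta$ would demand $0 < \deg \alpha, \deg \beta < 2ai$, which the connectivity excludes, and the integral $\mathrm{Tor}$-summands land in total degree $2ai-1$ with one factor of reduced degree zero, hence vanish since $H_0 = \mathbb Z$ is torsion-free. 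Thus $\Delta_*(w) = w \otimes 1 + 1 \otimes w$, i.e. every element of $H_{2ai}(X/X_{2ai-1})$ is primitive, and the identical argument settles degree $2aj+1$ for $X/X_{2aj}$.

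For the second point I would invoke naturality of the diagonal: for any pointed map $f : A \to Z$ one has $\Delta_Z \circ f = (f \times f) \circ \Delta_A$, so $\Delta_{Z_*}(f_* \alpha) = (f \times f)_* \Delta_{A_*}(\alpha)$, and if $\alpha$ is primitive this collapses to $f_* \alpha \otimes 1 + 1 \otimes f_* \alpha$ (using $f_*(1) = 1$). Applying this with $f = \bar \varphi_i$ and $f = \bar \psi_j$ and combining with the first point shows that the entire images of $\bar \varphi_{i_*}$ and $\bar \psi_{j_*}$ — and a fortiori the images of $\hat \varphi_{i_*}$ and $\hat \psi_{j_*}$ — consist of primitive elements of $H_*(\Omega \Sigma X)$. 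The only step that demands genuine care is the integral bookkeeping in the middle paragraph: one must verify that the Künneth $\mathrm{Tor}$-contributions in total degree $2ai$ really drop out, which they do precisely because the quotient is reduced-homology trivial below dimension $2ai$ and $H_0$ is free. I emphasize that no H-space or Hopf-algebra structure on $\Omega \Sigma X$ is invoked, since primitivity here is measured solely against the coproduct induced by the diagonal.
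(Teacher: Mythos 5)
Your proposal is correct, and its overall skeleton is exactly the paper's: factor $\hat \varphi_i$ and $\hat \psi_j$ up to homotopy through the quotients $X/X_{2ai-1}$ and $X/X_{2aj}$ via diagrams (2.3) and (2.4), show that every class in the bottom nonvanishing dimension of the quotient is primitive, and then push primitivity forward using naturality of the diagonal (your third paragraph is precisely the lemma the paper proves first, that a homomorphic image of a primitive element is primitive). The one genuine divergence is the middle step. The paper argues: $X/X_{2ai-1}$ is $(2ai-1)$-connected, so by the Hurewicz theorem every class in $H_{2ai}(X/X_{2ai-1})$ is spherical, and spherical classes are primitive. You instead prove the same statement by a direct K\"{u}nneth computation: the cross terms $\alpha \otimes \beta$ with $0 < \deg\alpha, \deg\beta < 2ai$ vanish by connectivity, the $\mathrm{Tor}$-summands in total degree $2ai$ vanish because any such term would need a factor in degree $0$ and $H_0 = \mathbb{Z}$ is free, and the counit identity pins down the two remaining components as $w \otimes 1$ and $1 \otimes w$. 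Both are valid; your route is more elementary and self-contained (it needs neither the Hurewicz theorem nor the fact that spherical classes are primitive, which is itself usually proved by your kind of argument applied to $S^n$), while the paper's route is shorter modulo those standard facts. A small bonus of your formulation: your connectivity argument applies verbatim to the case $i=1$ (where $X_{2a-1}=*$ and the quotient map is the identity), which the paper handles by a separate remark.
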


\begin{proof}
In general, we first note that if $f : X \rightarrow Y$ is continuous map and if $x$ is a primitive element of $H_* (X; I)$ with coefficients in a principal ideal domain $I$, then from the commutative diagram
$$
\xymatrix@C=10mm @R=8mm{
H_* (X; I) \ar[r]^-{f_*} \ar[d]_-{\Delta_*}  &H_* (Y; I)  \ar[d]^-{\Delta_*} \\
H_* (X \times X; I) \ar[r]^-{(f \times f)_*} &H_* (Y \times Y; I),
}
$$
we have
$$
\begin{array}{lll}
\Delta_* \circ (f_* (x)) &= (f \times f)_* \circ \Delta_* (x) \\
&= (f \times f)_* ({i_1}_* (x) + {i_2}_* (x)) \\
&= {i_1}_* (f_* (x)) + {i_2}_* (f_* (x)), \\
\end{array}
$$
where $i_1 , i_2 : W \rightarrow W \times W$ is the first and second inclusions on $W = X$ or $Y$;
that is, the homomorphic image of the primitive elements is also primitive.

The constructions of the maps
$$
\bar \varphi_i : X/X_{2ai-1} \rightarrow \Omega \Sigma X ~({\rm resp.}~ \bar \psi_j : X/X_{2aj} \rightarrow \Omega \Sigma X)
$$
assert that the maps
$$
\hat \varphi_i : X \rightarrow \Omega \Sigma X ~({\rm resp.}~ \hat \psi_j : X \rightarrow \Omega \Sigma X)
$$
can be factored through the projections
$$
p_{\iota_i} : X \rightarrow X/X_{2ai-1} ~({\rm resp.}~ p_{\zeta_j} : X \rightarrow X/X_{2aj})
$$
and
$$
\bar \varphi_i : X/X_{2ai-1} \rightarrow \Omega \Sigma X ~({\rm resp.}~ \bar \psi_j : X/X_{2aj} \rightarrow \Omega \Sigma X)
$$
up to homotopy for each $i =2,3,4,\ldots$ and $j =1,2,3,\ldots$.
For $i=1$, we also note that the following diagram
$$
\xymatrix@C=8mm @R=8mm{
    X \ar[rr]^-{\hat \varphi_1 }  \ar[dr]_-{1_X}  && \Omega \Sigma X &\\
     &X = X/X_{2a-1} \ar[ur]_-{\hat \varphi_1}
     }
$$
is strictly commutative due to the fact that $X_{2a-1} = *$. Since $X/X_{2ai-1}$ is $(2ai-1)$-connected, by the Hurewicz isomorphism, we can see that every homology class of $H_{2ai} (X/X_{2ai-1})$ is spherical.
Since every spherical homology class is primitive, it is primitive. It can be seen that the image of
$$
\hat \varphi_{i_*} : H_{2ai}(X) \rightarrow H_{2ai}(\Omega \Sigma X)
$$
lies in the set of all primitive homology classes $PH_{2ai}(\Omega \Sigma X)$ in $H_{2ai}(\Omega \Sigma X)$ for each $i=1,2,3,\ldots$.
The argument for the other case is completely similar.
\end{proof}

The following is the generalized version of \cite[Lemma 3.2]{L1}.

\begin{prop} \label{Re}
Let $x_m$ be a rationally nonzero homotopy class of $\pi_{2am+1}(\Sigma X)$. Then
$$
{(1 + [\varphi_{s_{l_1}}, [\varphi_{s_{l_1 -1}},\ldots,[\varphi_{s_{1}}, \varphi_{s_2}]_c \ldots]_c]_c})_{\sharp} (x_m ) \,
=\, x_m + {[\varphi_{s_{l_1}}, [\varphi_{s_{l_1 -1}},\ldots,[\varphi_{s_{1}}, \varphi_{s_2}]_c \ldots]_c]_c}_{\sharp} (x_m),
$$
where $m = s_1+s_2+\cdots+s_{l_1}$, and the first addition is the one of suspension structure on $\Sigma X$, while the second addition refers to the one of homotopy groups.
\end{prop}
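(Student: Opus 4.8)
The plan is to evaluate the composite $(1+F)\circ x_m$ by unwinding the co-$H$ addition on $\Sigma X$ and then to kill every term except $x_m$ and $F\circ x_m$ by a connectivity count. Write $F=[\varphi_{s_{l_1}},[\varphi_{s_{l_1-1}},\ldots,[\varphi_{s_1},\varphi_{s_2}]_c\ldots]_c]_c$, and recall that, by the very definition of addition in the group $[\Sigma X,\Sigma X]$ coming from the pinch comultiplication $\nu:\Sigma X\to\Sigma X\vee\Sigma X$, one has $1+F=\nabla\circ(1\vee F)\circ\nu$, where $\nabla:\Sigma X\vee\Sigma X\to\Sigma X$ is the fold map. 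Thus
$$
(1+F)\circ x_m=\nabla\circ(1\vee F)\circ(\nu\circ x_m),
$$
and the whole computation is reduced to identifying the single class $\nu\circ x_m\in\pi_{2am+1}(\Sigma X\vee\Sigma X)$.

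First I would decompose this class. Let $j_1,j_2:\Sigma X\to\Sigma X\vee\Sigma X$ be the two inclusions and $q_1,q_2:\Sigma X\vee\Sigma X\to\Sigma X$ the two retractions. By the Hilton--Milnor theorem (the form of Hilton's formula, Theorem \ref{H}, appropriate to a wedge of suspensions), $\pi_{2am+1}(\Sigma X\vee\Sigma X)$ is the direct sum of the two axis copies of $\pi_{2am+1}(\Sigma X)$ together with summands generated by iterated Whitehead products of classes pulled in from the two factors; since $\Sigma X$ has finite type, only finitely many basic products contribute in this degree. The co-unit relations $q_1\circ\nu\simeq 1\simeq q_2\circ\nu$ pin down the two axis components of $\nu\circ x_m$ to be precisely $j_1x_m$ and $j_2x_m$, so
$$
\nu\circ x_m=j_1x_m+j_2x_m+E,
$$
where $E$ is a finite sum of basic Whitehead products each of which is annihilated by both $q_1$ and $q_2$ and hence involves at least one entry from the first factor and at least one entry from the second. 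Applying $\nabla\circ(1\vee F)$ and using the naturality of Whitehead products (it carries $j_1\beta\mapsto\beta$, $j_2\gamma\mapsto F\circ\gamma$, and a bracket $[j_1\beta,j_2\gamma]$ to $[\beta,F\circ\gamma]$) gives
$$
(1+F)\circ x_m=x_m+F\circ x_m+\nabla\circ(1\vee F)(E),
$$
so it remains to show that $\nabla\circ(1\vee F)(E)$ is null.

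This last step is the heart of the matter, and it rests on a sharp dimension count. Because $\Sigma X$ is $2a$-connected, every nontrivial homotopy class of $\Sigma X$ has domain dimension at least $2a+1$; hence in any basic Whitehead product of total degree $2am+1$ assembled from at least two such classes, each individual entry has degree at most $2am+1-(2a+1)+1=2a(m-1)+1$. Every term of $E$ contains at least one entry lying in the second (the $F$) wedge slot, and that entry $\gamma$ therefore satisfies $\dim\gamma\le 2a(m-1)+1<2am$; by cellular approximation $\gamma$ factors through the skeleton $\Sigma X_{2am}$, on which $F$ is null-homotopic by Remark \ref{Res}(a) (there $k_1=2a(s_1+\cdots+s_{l_1})=2am$). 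Consequently $F\circ\gamma\simeq *$, and a Whitehead bracket with a null entry is null, so the whole of $\nabla\circ(1\vee F)(E)$ vanishes and $(1+F)\circ x_m=x_m+F\circ x_m$, as claimed. The main obstacle, and the place where the hypothesis $m=s_1+\cdots+s_{l_1}$ is used in an essential, sharp way, is precisely this separation of scales: the strict inequality $2a(m-1)+1<2am$ guarantees that every cross-term input is low enough to be swallowed by the null-skeleton of $F$, while the generator $x_m$, whose domain $S^{2am+1}$ sits exactly one dimension above that skeleton, is not swallowed, so that the genuine term $F\circ x_m$ survives and must be retained.
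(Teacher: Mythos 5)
Your argument is correct in substance, but it takes a genuinely different route from the paper's. You expand $\nu\circ x_m$ in $\pi_{2am+1}(\Sigma X\vee\Sigma X)$ by the Hilton--Milnor theorem, pin down the two axis components with the retractions $q_1,q_2$, and then annihilate the cross terms after applying $\nabla\circ(1\vee F)$, using the fact (Remark \ref{Res}(a)) that $F$ is null on $\Sigma X_{2am}$ together with a dimension count showing that every second-slot entry is too low-dimensional to escape that skeleton. The paper never decomposes this homotopy group: it uses Remark \ref{Res} to factor $F$ as $\bar F\circ\Sigma p_{\zeta_m}$ through the quotient $\Sigma X/\Sigma X_{2am}$, and then compares wedge with product via the fibration $\Sigma X\flat\Sigma X\to\Sigma X\vee\Sigma X\to\Sigma X\times\Sigma X$ after collapsing the second factor; since $\Sigma X/\Sigma X_{2am}$ is $2am$-connected, $\pi_{2am+1}$ of the wedge and of the product agree, and in the product the needed compatibility is the strict identity $\Delta\circ x_m=(x_m\times x_m)\circ\Delta$. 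Conceptually, your cross terms are exactly the homotopy of the fiber $\Sigma X\flat\Sigma X$ that the paper's projection renders invisible: the paper kills them all at once by a single connectivity estimate, while you kill them term by term. What your route buys is an explicit identification of the error term and a visibly more general statement, namely $(1+F)_\sharp(x)=x+F_\sharp(x)$ for any class $x$ lying at most one dimension above a skeleton on which $F$ is null; what the paper's route buys is independence from the Hilton--Milnor machinery, replaced by one connectivity argument and the strict naturality of the diagonal.

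One step in your write-up should be tightened. The cross summands of $\pi_{2am+1}(\Sigma X\vee\Sigma X)$ are not literally iterated Whitehead products of sphere classes $j_{1\sharp}\beta$, $j_{2\sharp}\gamma$; that description is the wedge-of-spheres case (Theorem \ref{H}), whereas here each cross term has the form $w_\sharp(\alpha)$, where $w:\Sigma X^{\wedge n_w}\to\Sigma X\vee\Sigma X$ is a generalized iterated Whitehead product of the two wedge inclusions (with both letters occurring) and $\alpha\in\pi_{2am+1}(\Sigma X^{\wedge n_w})$, which need not split as a bracket of classes. Your vanishing argument survives this correction, by naturality: $\nabla\circ(1\vee F)\circ w$ is the generalized iterated Whitehead product with entries $1$ and $F$, and since $F$ factors through $\Sigma(X/X_{2am})$, this composite factors through $\Sigma\bigl(X^{\wedge(n_w-1)}\wedge(X/X_{2am})\bigr)$, which is $(2a(n_w-1)+2am+1)$-connected and hence more than $(2am+1)$-connected because $n_w\geq 2$; composing with $\alpha$ therefore gives zero. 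This is the same dimension count you made, transposed from sphere entries to smash powers.
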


\begin{proof}
Let $F_1 = [\varphi_{s_{l_1}}, [\varphi_{s_{l_1 -1}},\ldots,[\varphi_{s_{1}}, \varphi_{s_2}]_c \ldots]_c]_c : \Sigma X \rightarrow \Sigma X$.
Then we first show that the following diagram
$$
\xymatrix@C=10mm @R=8mm{
S^{2am+1} \ar[r]^-{x_m} \ar[d]^{\nu} &\Sigma X  \ar[r]^{1 + F_1} &\Sigma X\\
S^{2am+1} \vee S^{2am+1} \ar[r]^-{x_m \vee x_m}  &\Sigma X \vee \Sigma X \ar[ur]_-{(1, F_1)}
}
$$
is commutative up to homotopy.
Here
\begin{enumerate}
\item [{\rm (1)}] $\nu : S^{2am+1} \rightarrow S^{2am+1} \vee S^{2am+1}$ is the (homotopically unique) standard comultiplication of the sphere;
\item [{\rm (2)}]  $1 + F_1 : \Sigma X \rightarrow \Sigma X$ is defined by the homotopy class of the composite
    \begin{equation}\tag{2.5}
    \xymatrix@C=10mm @R=10mm{
    \Sigma X \ar[r]^-{\nu} &\Sigma X \vee \Sigma X \ar[r]^-{1 \vee F_1} \ar@/_1.5pc/[rr]_{(1,F_1)}
     &\Sigma X \vee \Sigma X \ar[r]^{\nabla} &\Sigma X, }
   \end{equation}
where $\nu : \Sigma X \rightarrow \Sigma X \vee \Sigma X$ is the standard suspension comultiplication of $\Sigma X$ (which we also denote as $\nu$), and $\nabla : \Sigma X \vee \Sigma \rightarrow \Sigma X$ is the folding map; and
\item [{\rm (3)}] $(1 , F_1 ) : \Sigma X \vee \Sigma X \rightarrow \Sigma X$ is defined by the homotopy class of the composition of the latter two maps of (2.5).
\end{enumerate}
As usual, we let $\Sigma X_{2am}$ be the $2am$-skeleton of $\Sigma X$. Then from the construction of the self-maps $\varphi_i$ and $\psi_j$ of $\Sigma X$ for $i,j =1,2,3,\ldots$, we now have the following commutative diagram
$$
\xymatrix@C=10mm @R=8mm{
    \Sigma X \ar[rr]^-{F_1}  \ar[dr]_-{\Sigma p_{\zeta_m}}  &&\Sigma X &\\
     &\Sigma X /\Sigma X_{2am} \ar[ur]_-{\bar F_1} ,
     }
$$
where $\Sigma p_{\zeta_m}$ is the projection and $\bar F_1$ is the extension obtained from the cofibration
$$
\xymatrix@C=13mm @R=5mm{
\Sigma X_{2am} \ar@{^{(}->}[r]^-{\Sigma \zeta_m} &\Sigma X   \ar[r]^-{\Sigma p_{\zeta_m}} &\Sigma X /\Sigma X_{2am}.
}
$$
The Remark \ref{Res} suggests that the extension
$$
\bar F_1 : \Sigma X /\Sigma X_{2am} \rightarrow \Sigma X
$$
will be possible.
We now consider the following commutative diagram:
$$
\xymatrix@C=15mm @R=8mm{
S^{2am+1} \ar[r]^-{x_m} \ar[d]^{\Delta} &\Sigma X  \ar[d]^{\Delta} \ar[dr]^{(1 \times \Sigma p_{\zeta_m}) \circ \Delta}\\
S^{2am+1} \times S^{2am+1} \ar[r]^-{x_m \times x_m}  &\Sigma X \times \Sigma X \ar[r]^-{1 \times \Sigma p_{\zeta_m}}  &\Sigma X \times \Sigma X /\Sigma X_{2am},
}
$$
where $\Delta$ is the diagonal map.
Since $\nu : S^{2am+1} \rightarrow S^{2am+1} \vee S^{2am+1}$ is the unique standard comultiplication and $\nu : \Sigma X \rightarrow \Sigma X \vee \Sigma X$ is the standard comultiplication, the following triangles
$$
\xymatrix@C=3.0mm @R=5mm{
S^{2am+1} \ar[rr]^-{\Delta}  \ar[dr]_-{\nu}  && S^{2am+1} \times S^{2am+1} & \mathrm{and} &\Sigma X \ar[rr]^-{\Delta}  \ar[dr]_-{\nu}  && \Sigma X \times \Sigma X &\\
&S^{2am+1} \vee S^{2am+1} \ar@{^{(}->}[ur]_-{\mathrm{inclusion}}& && &\Sigma X \vee \Sigma X \ar@{^{(}->}[ur]_-{\mathrm{inclusion}} }
$$
are homotopy commutative. Since $\Sigma X /\Sigma X_{2am}$ is $2am$-connected, from the fibration
$$
\xymatrix@C=10mm@R=8mm{
\Sigma X  \flat \Sigma X \ar[r]^-{} &\Sigma X  \vee \Sigma X \ar[r]^-{} &\Sigma X  \times \Sigma X,
}
$$
where $\flat$ is the flat product, we have an isomorphism of homotopy groups
$$
\xymatrix@C=10mm@R=8mm{
\pi_{2am+1} (\Sigma X  \vee \Sigma X /\Sigma X _{2am}) \ar[r]^-{\cong} &\pi_{2am+1} (\Sigma X  \times \Sigma X /\Sigma X_{2am})
}
$$
sending the homotopy class of
$$
(1 \vee \Sigma p_{\zeta_m})\circ (x_{m} \vee x_{m} ) \circ \nu \simeq (1 \vee \Sigma p_{\zeta_m})\circ \nu \circ x_{m}
$$
to the homotopy class of
$$
(1 \times \Sigma p_{\zeta_m}) \circ (x_{m} \times x_{m} ) \circ \Delta \simeq (1 \times \Sigma p_{\zeta_m})\circ \Delta \circ x_m .
$$
We finally have the following commutative diagram up to homotopy
$$
\xymatrix@C=28mm @R=10mm{
S^{2am+1} \ar[r]^-{x_m} \ar[d]^{\nu} &\Sigma X \ar[d]^-{(1 \vee \Sigma p_{\zeta_m})\circ \nu } \ar[r]^-{1 + F_1} &\Sigma X\\
S^{2am+1} \vee S^{2am+1} \ar[r]^-{(1 \vee \Sigma p_{\zeta_m})\circ (x_m \vee x_m)}  &\Sigma X \vee \Sigma X/ \Sigma X_{2am} \ar[ur]_-{(1, \bar F_1)}
}
$$
as required.
\end{proof}

\begin{rmk} Similarly, if $w_n$ and $w_s$ are homotopy classes of $\pi_{*}(\Sigma X)/{\rm torsion}$, then it can also be shown that
$$
{(1 + [\psi_{s_{l_2}}, [\psi_{s_{l_2 -1}},\ldots,[\psi_{s_{1}}, \psi_{s_2}]_c \ldots]_c]_c )}_{\sharp} (w_n ) \,
=\, w_n + {[\psi_{s_{l_2}}, [\psi_{s_{l_2 -1}},\ldots,[\psi_{s_{1}}, \psi_{s_2}]_c \ldots]_c]_c}_{\sharp} (w_n)
$$
and
$$
{(1 + [f_{s_{l}}, [f_{s_{l -1}},\ldots,[f_{s_{1}}, f_{s_2}]_c \ldots]_c]_c )}_{\sharp} (w_s ) \,
=\, w_s + {[f_{s_{l}}, [f_{s_{l -1}},\ldots,[f_{s_{1}}, f_{s_2}]_c \ldots]_c]_c}_{\sharp} (w_s)
$$
by using the same method in Proposition \ref{Re}.
\end{rmk}

Just like the (iterated) commutator on the suspension structure, we also consider the commutator on the loop structure
$$
[\hat f_{s_1}, \hat f_{s_2}]_c : X \longrightarrow \Omega \Sigma X
$$
defined by
$$
[\hat f_{s_1}, \hat f_{s_2}]_c (x) = \hat f_{s_1}(x) \cdot \hat f_{s_2}(x) \cdot \hat f_{s_1}^{-1}(x) \cdot \hat f_{s_2}^{-1}(x)
$$
where
\begin{enumerate}
\item [{\rm (1)}] $\hat f_{s_1} = \hat \varphi_{s_1}$ or $\hat \psi_{s_1}$ and  $\hat f_{s_2} = \hat \varphi_{s_2}$ or $\hat \psi_{s_2}$ for $s_1, s_2 = 1,2,3,\ldots$;
\item [{\rm (2)}] the operation `$\cdot$' is the loop multiplication, and the inverse is the loop inverse $\nu : \Omega \Sigma X \rightarrow \Omega \Sigma X$ defined by $\nu(g) = g^{-1}$ with $g^{-1} (s) = g(1-s)$.
\end{enumerate}
By induction, we define the (pure and hybrid) iterated commutator
$$
[\hat f_{s_l}, [\hat f_{s_{l-1}},\ldots,[\hat f_{s_{1}},\hat f_{s_2}]_c \ldots]_c]_c : X \longrightarrow \Omega \Sigma X
$$
of maps $\hat f_{s_{t}} : X \rightarrow \Omega \Sigma X, t =1,2,\ldots,l$ on the loop structure again, where $f_{s_t} = \varphi_{s_t}$ or $\psi_{s_t}$ for $s_t = 1,2,3,\ldots$.

\begin{prop} \label{Prop1}
Let $\hat \varphi_1 : X \rightarrow \Omega \Sigma X$ be the canonical inclusion. Then
$$
\Omega [f_{s_l}, [f_{s_{l-1}},\ldots,[f_{s_{1}}, f_{s_2}]_c \ldots]_c]_c  \circ \hat \varphi_1
= [\hat f_{s_l}, [\hat f_{s_{l-1}},\ldots,[\hat f_{s_{1}}, \hat f_{s_2}]_c \ldots]_c]_c ,
$$
where $f_{s_t} = \varphi_{s_t}$ or $\psi_{s_t}$ for $t =1,2,\ldots,l$ and $s_t = 1,2,3,\ldots$.
\end{prop}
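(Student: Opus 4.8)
The plan is to recognize the asserted identity as an instance of the classical fact that the suspension--loop adjunction converts the suspension (co-$H$) group law on $[\Sigma X, \Sigma X]$ into the loop ($H$-group) law on $[X, \Omega\Sigma X]$, and then to induct on the weight of the commutator. First I would introduce the operator $\Phi : [\Sigma X, \Sigma X] \to [X, \Omega\Sigma X]$ defined by $\Phi(f) = \Omega f \circ \hat\varphi_1$. Since $\hat\varphi_1 : X \to \Omega\Sigma X$ is exactly the unit of the $(\Sigma,\Omega)$ adjunction (the canonical, or James, inclusion), $\Phi$ is precisely the adjunction correspondence $f \mapsto \hat f$. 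Because each $\varphi_i$ (resp. $\psi_j$) was defined as the adjoint of $\hat\varphi_i$ (resp. $\hat\psi_j$), bijectivity of the adjunction gives $\Phi(f_{s_t}) = \hat f_{s_t}$ for every weight-one generator, which settles the base layer of the induction.

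The key step, and the one I expect to be the main obstacle, is to prove that $\Phi$ is a group homomorphism from $([\Sigma X, \Sigma X], +)$, where $f + g = \nabla \circ (f \vee g) \circ \nu$ is the suspension addition, to $([X, \Omega\Sigma X], \cdot)$, where $\cdot$ is the loop multiplication. I would establish $\Phi(f + g) = \Phi(f) \cdot \Phi(g)$ and $\Phi(-f) = \Phi(f)^{-1}$ by comparing, under the adjunction, the suspension comultiplication $\nu$ and folding map $\nabla$ on the source side with the loop multiplication and the diagonal on the target side. This is the standard compatibility of the co-$H$ structure of a suspension with the $H$-structure of a loop space (see, e.g., \cite{W}); the only genuine subtleties are that the two a priori distinct group laws are identified order-preservingly by $\Phi$, and that the loop reversal $g^{-1}(s) = g(1-s)$ realizes the $H$-group inverse up to homotopy, so that suspension inverses are sent to loop inverses. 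I would also note that no Eckmann--Hilton collapse occurs: since $X$ carries no relevant co-$H$ structure and $\Sigma X$ is not a loop space, neither group acquires a second compatible operation, so both may remain nonabelian and the commutators stay meaningful.

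Granting that $\Phi$ is a group homomorphism, the proposition follows formally, since any homomorphism carries commutators to commutators. In the base case $\Phi([f_{s_1}, f_{s_2}]_c) = [\Phi(f_{s_1}), \Phi(f_{s_2})]_c = [\hat f_{s_1}, \hat f_{s_2}]_c$, which, after unwinding $\Phi([f_{s_1}, f_{s_2}]_c) = \Omega[f_{s_1}, f_{s_2}]_c \circ \hat\varphi_1$, is exactly the claim for weight $2$. For the inductive step, write $G = [f_{s_{l-1}}, \ldots, [f_{s_1}, f_{s_2}]_c \cdots]_c$ and assume $\Phi(G) = [\hat f_{s_{l-1}}, \ldots, [\hat f_{s_1}, \hat f_{s_2}]_c \cdots]_c$. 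The homomorphism property then gives $\Phi([f_{s_l}, G]_c) = [\Phi(f_{s_l}), \Phi(G)]_c = [\hat f_{s_l}, \Phi(G)]_c$, and substituting the inductive hypothesis yields the full nested loop commutator. Since $\Phi([f_{s_l}, G]_c) = \Omega[f_{s_l}, G]_c \circ \hat\varphi_1$, this is precisely the desired equality, completing the induction.

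In summary, the entire argument reduces to the homomorphism lemma for $\Phi$; once that compatibility of the suspension and loop structures is in hand, the passage from $\Omega$ of a suspension-commutator composed with $\hat\varphi_1$ to the corresponding loop-commutator is a purely formal consequence of preserving the group operation, its inverse, and hence iterated commutators of arbitrary weight.
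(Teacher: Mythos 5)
Your proposal is correct and takes essentially the same route as the paper: the paper likewise identifies $f \mapsto \Omega f \circ \hat\varphi_1$ with the adjunction correspondence $\mathrm{ad} : [\Sigma X, \Sigma X] \rightarrow [X, \Omega\Sigma X]$ (verified there by the pointwise formulas (2.6) and (2.8)) and then invokes that $\mathrm{ad}$ is a group isomorphism carrying each $f_{s_t}$ to $\hat f_{s_t}$, hence iterated commutators to iterated commutators. The only cosmetic difference is that you make the weight induction and the homomorphism lemma explicit, whereas the paper cites the group-isomorphism property of the adjunction directly and absorbs the induction into it.
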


\begin{proof} We note that the canonical inclusion
$$
\hat \varphi_1 : X \rightarrow \Omega \Sigma X
$$
sends
$$
x \mapsto \hat \varphi_1 (x) : I \rightarrow \Sigma X,
$$
with
$$
\hat \varphi_1(x)(t) = [t,x]
$$
in $\Sigma X$, where $[t,x] \in \Sigma X$. Similarly, the maps
$$
\Omega [f_{s_l}, [f_{s_{l-1}},\ldots,[f_{s_{1}}, f_{s_2}]_c \ldots]_c]_c  \circ \hat \varphi_1 : X \longrightarrow \Omega \Sigma X,
$$
sends
$$
x \mapsto \Omega [f_{s_l}, [f_{s_{l-1}},\ldots,[f_{s_{1}}, f_{s_2}]_c \ldots]_c]_c  \circ \hat \varphi_1 (x) : I \rightarrow \Sigma X,
$$
and
\begin{equation}\tag{2.6}
((\Omega [f_{s_l}, [f_{s_{l-1}},\ldots,[f_{s_{1}}, f_{s_2}]_c \ldots]_c]_c  \circ \hat \varphi_1)(x))(t)
= [f_{s_l}, [f_{s_{l-1}},\ldots,[f_{s_{1}}, f_{s_2}]_c \ldots]_c]_c ([ t, x]),
\end{equation}
for all $t \in I$ and $x \in X$.

The adjointness $(\Sigma, \Omega)$ shows that there is an isomorphism
$$
\text{ad} = \widehat ~~: [\Sigma X, \Sigma X] \rightarrow [X, \Omega \Sigma X]
$$
of groups (not necessarily abelian), and we obtain
\begin{equation}\tag{2.7}
\text{ad} [f_{s_l}, [f_{s_{l-1}},\ldots,[f_{s_{1}}, f_{s_2}]_c \ldots]_c]_c
= [\hat f_{s_l}, [\hat f_{s_{l-1}},\ldots,[\hat f_{s_{1}}, \hat f_{s_2}]_c \ldots]_c]_c  : X \rightarrow \Omega \Sigma X
\end{equation}
as homotopy classes. Therefore, we have
\begin{equation}\tag{2.8}
(\text{ad} [f_{s_l}, [f_{s_{l-1}},\ldots,[f_{s_{1}}, f_{s_2}]_c \ldots]_c]_c (x))(t) =
[f_{s_l}, [f_{s_{l-1}},\ldots,[f_{s_{1}},f_{s_2}]_c \ldots]_c]_c ([t,x])
\end{equation}
for all $t \in I$ and $x \in X$.
By (2.6), (2.7) and (2.8), we complete the proof.
\end{proof}

\medskip

\section{Iterated Samelson (or Whitehead) products and homotopy generators}\label{section3}

Let ${\rm Aut} (Y^{(n)})$ denote the discrete group of homotopy classes of all
homotopy self-equivalences of $Y^{(n)}$
and let ${\rm Aut}( \pi_{\leq n} (Y))$ be the group of automorphisms of a graded group $\pi_{\leq n} (Y)$ that preserve the pairs of the Whitehead products. In 1992, McGibbon and M{\o}ller \cite[Theorem 1]{MM} proved the following theorem as the Eckmann-Hilton dual of \cite[Theorem 3]{MM1}:

\begin{thm} \label{MM} Let $Y$ be a simply connected space with finite type
over some subring of the ring of rational numbers. Assume that $Y$ has the rational
homotopy type of a bouquet of spheres. Then the following three conditions are equivalent:
\begin{enumerate}
\setlength{\itemsep}{-3mm}
\item[{\rm (a)}] $SNT(Y) = *$;
\item[{\rm (b)}] the map $\xymatrix@C=3.5pc{
{\rm Aut} (Y) \ar[r]^-{f \mapsto f^{(n)}} &{\rm Aut}(Y^{(n)})}$ has a finite cokernel for each $n$; and
\item[{\rm (c)}] the map $\xymatrix@C=3.5pc{
{\rm Aut} (Y) \ar[r]^-{f \mapsto f_\sharp} &{\rm Aut}( \pi_{\leq n} (Y))}$ has a finite cokernel for each $n$,
\end{enumerate}
where $*$ denotes the set consisting of a single homotopy type $\{[Y]\}$.
\end{thm}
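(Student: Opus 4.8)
The plan is to route everything through Wilkerson's identification $SNT(Y) \approx \varprojlim_n^1 \{\mathrm{Aut}(Y^{(n)})\}$ from \cite{WI} together with the Mittag--Leffler criterion for the vanishing of a derived limit of a tower of groups. I would write $G_n = \mathrm{Aut}(Y^{(n)})$, let the bonding map $G_{n+1}\to G_n$ be the restriction $f\mapsto f^{(n)}$, and set $I_n^m = \mathrm{im}(G_m\to G_n)$ for $m\ge n$, a decreasing chain of subgroups of $G_n$. Since $Y$ has finite type the groups $G_n$ are countable, so $\varprojlim_n^1 G_n = *$ is equivalent to the Mittag--Leffler condition that for each $n$ the chain $\{I_n^m\}_m$ stabilizes. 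Because $\mathrm{Aut}(Y)$ maps compatibly into the whole tower, its image $H_n := \mathrm{im}(\mathrm{Aut}(Y)\to G_n)$ sits inside every $I_n^m$, and hence inside the eventual image $I_n^{\infty}=\bigcap_m I_n^m$. Thus condition (a) becomes a statement about stabilization of the $I_n^m$, which I can compare with the index conditions (b) and (c).

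The easy direction is (b)$\Rightarrow$(a). If $\mathrm{Aut}(Y)\to G_n$ has finite cokernel, then $H_n$ has finite index in $G_n$; as $H_n\subseteq I_n^m$ for all $m$, each $I_n^m$ lies between the fixed finite-index subgroup $H_n$ and $G_n$. A group with a finite-index subgroup has only finitely many intermediate subgroups, so the descending chain $\{I_n^m\}_m$ is a descending chain in a finite poset and must stabilize. Hence Mittag--Leffler holds, $\varprojlim_n^1 G_n = *$, and $SNT(Y)=*$.

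For the converse (a)$\Rightarrow$(b) I would isolate two lemmas that consume the hypotheses. The first is a lifting lemma: for $Y$ of finite type whose rationalization is a bouquet of spheres, each bonding map $G_{n+1}\to G_n$ has finite cokernel, so every $I_n^m$ has finite index in $G_n$. The justification is that a rational wedge of spheres is rationally formal with free homotopy Lie algebra, so its rational $k$-invariants vanish; extending a self-equivalence of $Y^{(n)}$ over $Y^{(n+1)}$ then only requires preserving a torsion $k$-invariant, a finite-index condition handled by obstruction theory. Granting this, a descending chain of finite-index subgroups stabilizes exactly when its indices are bounded, so Mittag--Leffler forces $I_n^{\infty}$ to have finite index in $G_n$. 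The second is a realization lemma: $H_n$ has finite index in $I_n^{\infty}$, i.e.\ a coherent system of self-equivalences of the $Y^{(n)}$ is, up to finite index, induced by a genuine self-equivalence of $Y$. Combining the two gives $H_n$ of finite index in $G_n$, which is (b). The hard part will be this realization lemma: promoting a compatible family $\{f^{(n)}\}\in\varprojlim_n G_n$ to an honest self-equivalence of $Y$ is precisely where phantom phenomena intrude, since the obstruction is the failure of $\mathrm{Aut}(Y)\to\varprojlim_n G_n$ to be onto, and taming it requires finite type (so that the $\varprojlim^1$ of homotopy sets $[Y,Y^{(n)}]$ controlling phantom maps is manageable) together with the free structure of the rational homotopy Lie algebra.

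Finally, I would obtain (b)$\Leftrightarrow$(c) by comparing the two targets through the representation $h_n\colon G_n\to \mathrm{Aut}(\pi_{\le n}(Y))$, $f\mapsto f_{\sharp}$, which factors the map $f\mapsto f_\sharp$ as $\mathrm{Aut}(Y)\to G_n \xrightarrow{h_n}\mathrm{Aut}(\pi_{\le n}(Y))$. The claim to establish is that $h_n$ has finite kernel and finite cokernel: its kernel consists of self-equivalences of $Y^{(n)}$ inducing the identity on all homotopy groups, which is finite once torsion is accounted for by obstruction theory and finite type; its cokernel is finite because, for a rational bouquet of spheres, every Whitehead-product-preserving automorphism of $\pi_{\le n}(Y)$ is realized by a self-equivalence up to finite index. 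A homomorphism with finite kernel and cokernel both preserves and reflects the property of having a finite-index image, so $\mathrm{Aut}(Y)\to G_n$ has finite cokernel if and only if its composite with $h_n$ does, which is exactly (b)$\Leftrightarrow$(c). As in the previous step, the finiteness of the kernel and cokernel of $h_n$ is where ``finite type $+$ rational bouquet of spheres'' is indispensable; without it the chain $\{I_n^m\}$ could stabilize at an infinite-index subgroup and the equivalence would fail.
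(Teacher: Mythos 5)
First, a point of comparison: this statement is not proved in the paper at all. It is McGibbon--M{\o}ller's theorem, quoted from \cite[Theorem 1]{MM} (as the Eckmann--Hilton dual of \cite[Theorem 3]{MM1}) and then used as a black box in Section 4. So there is no in-paper proof to measure your argument against; it has to stand on its own. Your skeleton --- Wilkerson's identification $SNT(Y)\approx{\varprojlim_n^1}\{\mathrm{Aut}(Y^{(n)})\}$ together with the Mittag--Leffler criterion for towers of countable groups --- is the correct framework (and is the framework McGibbon and M{\o}ller themselves work in), and your direction (b)$\Rightarrow$(a) is complete and correct: the image $H_n$ of $\mathrm{Aut}(Y)$ lies in every $I^m_n$, a group has only finitely many subgroups containing a fixed finite-index subgroup, so each chain $\{I^m_n\}_m$ stabilizes, and Mittag--Leffler for a tower of countable groups (countability is where finite type enters) kills ${\varprojlim^1}$.

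The other two implications, however, rest on lemmas whose stated justifications are wrong, and these lemmas are exactly where the hypotheses of the theorem must be consumed. (i) Your ``lifting lemma'' is justified by the claim that a rational bouquet of spheres has vanishing rational $k$-invariants. That is false: vanishing rational $k$-invariants would make $Y_{\mathbb Q}$ a product of Eilenberg--MacLane spaces, i.e.\ an H-space, whereas the $k$-invariants of a wedge of spheres encode precisely its nontrivial Whitehead products; already for $Y=S^2$ the first rational $k$-invariant is the cup square in $H^4(K(\mathbb Q,2);\mathbb Q)$. What the hypothesis actually buys is that $\pi_*(\Omega Y)\otimes\mathbb Q$ is a \emph{free} Lie algebra, so automorphisms of its truncations lift and the groups $\mathrm{Aut}(\pi_{\le n}(Y)/\mathrm{torsion})$ are arithmetic; the finite-cokernel statements come from arithmetic-group theory (images of arithmetic groups under morphisms of $\mathbb Q$-algebraic groups are commensurable with arithmetic subgroups), not from vanishing obstructions. (ii) For (b)$\Leftrightarrow$(c) you need $h_n:\mathrm{Aut}(Y^{(n)})\to\mathrm{Aut}(\pi_{\le n}(Y))$ to have finite kernel. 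Its kernel is the group of self-equivalences of a finite Postnikov piece inducing the identity on all homotopy groups, which is nilpotent (Dror--Zabrodsky) but typically \emph{infinite}: for the two-stage piece $K(\mathbb Z,2)\times K(\mathbb Z,6)$ the maps $(a,b)\mapsto(a,b+\phi(a))$, with $\phi$ running through $H^6(K(\mathbb Z,2);\mathbb Z)\cong\mathbb Z$, already give an infinite kernel, and the Postnikov stages of a rational bouquet of spheres have plenty of rational cohomology of exactly this kind. So ``finite kernel plus finite cokernel'' cannot be the mechanism; the direction (c)$\Rightarrow$(b) in particular needs the image of $\mathrm{Aut}(Y)$ to capture a finite-index subgroup of $\ker h_n$, which is a genuine realization statement you have not addressed. (iii) Conversely, the step you flag as ``the hard part'' --- promoting a coherent family $\{f^{(n)}\}\in\varprojlim_n\mathrm{Aut}(Y^{(n)})$ to a self-equivalence of $Y$ --- is actually the routine part and needs no rational hypothesis: for a finite-type CW-complex the Milnor ${\varprojlim^1}$-sequence makes $[Y,Y]\to\varprojlim_n[Y,Y^{(n)}]$ surjective, a self-map inducing equivalences on all Postnikov stages is an equivalence by the Whitehead theorem, and under Mittag--Leffler the image of $\mathrm{Aut}(Y)$ in $\mathrm{Aut}(Y^{(n)})$ is then exactly the stable image $I^\infty_n$, not merely of finite index in it. In short, your outline has the right shape, but the two lemmas carrying all the content of the theorem are unproved, and the reasons you offer for them would fail.
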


We note that the Leray-Serre spectral sequence of a path space fibration of the Eilenberg-MacLane spaces says that
the rational cohomology $H^* (K(\mathbb Z, 2a) ;\mathbb Q)$ is isomorphic to the polynomial algebra $\mathbb Q[\alpha]$ over $\mathbb Q$ generated by $\alpha$ of dimension $2a$; that is, $\alpha$
is a rational generator of $H^{2a}(K(\mathbb Z, 2a);\mathbb Q)$ satisfying $<\alpha^s ,\bar \alpha_t > = \delta_{st}$, where $\bar \alpha_t$ is a rational
homology generator of dimension $2at$. Similarly,
there is an isomorphism of algebras
$$
H^* (K(\mathbb Z, 2aj+1) ;\mathbb Q) \cong \Lambda[\beta_j],
$$
where $\Lambda[\beta_j]$ is the exterior algebra on the cohomology class $\beta_j$ of odd dimension $2aj+1$ for each $j =1,2,3,\ldots$.

Generally, if $X$ is a connected Hopf space of finite type, then it has $k$-invariants of finite order, and its rational cohomology is isomorphic to the tensor product of the polynomial algebra of even degree generators and the exterior algebra of odd degree generators as algebras.
We note that the multiplication map for the loop structure
$\mu : \Omega \Sigma X \times \Omega \Sigma X \longrightarrow \Omega \Sigma X$
provides the homology and cohomology of $\Omega \Sigma X$ with an algebra structure that is natural with respect to Hopf maps.
Specifically, the loop structure of $\Omega \Sigma X$ makes the graded rational homology $H_* (\Omega \Sigma X; \mathbb Q)$ into the graded algebra which is called the {\it Pontryagin algebra} of $\Omega \Sigma X$.

In reduced homology of $X: = K( {\mathbb Z}, 2a) \vee \bigvee_{j=1}^{\infty} K( {\mathbb Z}, 2aj+1)$ with rational coefficients, we obtain
$$
\tilde H_* ( X ; {\mathbb Q}) \cong {\mathbb Q} \{ u_1, u_2, u_3, \ldots, u_i, \ldots ; v_1, v_2, v_3, \ldots, v_j , \ldots \}
$$
as a graded $\mathbb Q$-module, where $u_i \in H_{2ai}( X ; {\mathbb Q})$ and $v_j \in H_{2aj+1}( X ; {\mathbb Q})$ are the
standard generators of rational homology groups for $i,j =1,2,3, \ldots$.
It is well known in \cite{BS} that the Pontryagin algebra
$H_* (\Omega \Sigma X ;\mathbb Q )$ is isomorphic to the tensor algebra
$TH_* (X ; \mathbb Q )$; that is, the rational homology of $\Omega
\Sigma X $ is the tensor algebra
$$
TH_* (X ; \mathbb Q ) \cong T [u_1, u_2, u_3, \ldots, u_i, \ldots ; v_1, v_2, v_3, \ldots, v_j , \ldots]
$$
generated by  $\{ u_1, u_2, u_3, \ldots, u_i, \ldots ; v_1, v_2, v_3, \ldots, v_j , \ldots \}$.

For the moment, we will make use of the following notations:
\begin{itemize}
\item $\iota_{s_1, s_2, \ldots, s_l} : X_{2a(s_1+s_2+\cdots+s_l)} \hookrightarrow X$ is the inclusion.
\item $\xi_{s_1, s_2, \ldots, s_l}^{l-t} : X_{2a(s_1+s_2+\cdots+s_l)+t} \hookrightarrow X$ is the inclusion for $1 \leq t \leq l-1$.
\item $\zeta_{s_1, s_2, \ldots, s_l} : X_{2a(s_1+s_2+\cdots+s_l)+l} \hookrightarrow X$ is the inclusion.
\item $\pi_{\iota_{s_1, s_2, \ldots, s_l}} : X_{2a(s_1+s_2+\cdots+s_l)} \rightarrow S^{2a(s_1+s_2+\cdots+s_l)}$ is the projection to the top cell of $X_{2a(s_1+s_2+\cdots+s_l)}$.
\item $\pi_{\xi_{s_1, s_2, \ldots, s_l}^{l-t}} : X_{2a(s_1+s_2+\cdots+s_l)+t} \rightarrow S^{2a(s_1+s_2+\cdots+s_l)+t}$ is the projection to the top cell of $X_{2a(s_1+s_2+\cdots+s_l)+t}$ for $1 \leq t \leq l-1$.
\item $\pi_{\zeta_{s_1, s_2, \ldots, s_l}} : X_{2a(s_1+s_2+\cdots+s_l)+l} \rightarrow S^{2a(s_1+s_2+\cdots+s_l)+l}$ is the projection to the top cell of $X_{2a(s_1+s_2+\cdots+s_l)+l}$.
\item $\bar \Delta : X \rightarrow X \wedge X$ be the reduced diagonal map; that is, the composite of the diagonal $\Delta : X \rightarrow X \times X$ with the natural projection $X \times X \rightarrow X \wedge X$ onto the smash product.
\end{itemize}

We now have the following:

\begin{lem} \label{Le0} If $<~,~>$ is the Samelson product, then
\begin{itemize}
\item [{\rm (a)}] $[\hat \varphi_{s_{i}}, \hat \varphi_{s_j}]_c \circ \iota_{s_i, s_j} ~\simeq~ <\hat x_{s_{i}}, \hat x_{s_{j}}> \circ \pi_{\iota_{s_i, s_j}} : X_{2a(s_i+s_j)} \rightarrow \Omega \Sigma X$;

\item [{\rm (b)}] $[\hat \varphi_{s_{i}}, \hat \psi_{s_j}]_c \circ \xi_{s_i, s_j}^1 ~\simeq~ <\hat x_{s_{i}}, \hat y_{s_{j}}> \circ \pi_{\xi_{s_i, s_j}^1} : X_{2a(s_i+s_j)+1} \rightarrow \Omega \Sigma X$; and

\item [{\rm (c)}] $[\hat \psi_{s_{i}}, \hat \psi_{s_j}]_c \circ \zeta_{s_i, s_j} ~\simeq~ <\hat y_{s_{i}}, \hat y_{s_{j}}> \circ \pi_{\zeta_{s_i, s_j}} : X_{2a(s_i+s_j)+2} \rightarrow \Omega \Sigma X$
\end{itemize}
for $i,j = 1,2,\ldots,l$ with $l \geq 2$ and $s_i, s_j =1,2,3,\ldots$.
\end{lem}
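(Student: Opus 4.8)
The plan is to prove all three parts by a single template, since (a), (b), (c) differ only in the dimensions of the bottom cells, the relevant smash factor, and the Koszul sign. First I would rewrite each loop commutator through the reduced diagonal. Writing $C\colon \Omega\Sigma X\times\Omega\Sigma X\to\Omega\Sigma X$ for the group commutator map $(f,g)\mapsto fgf^{-1}g^{-1}$, one has $[\hat\varphi_{s_i},\hat\varphi_{s_j}]_c=C\circ(\hat\varphi_{s_i}\times\hat\varphi_{s_j})\circ\Delta$, and since $C$ is inessential on the wedge $\Omega\Sigma X\vee\Omega\Sigma X$ it factors as $\bar C\circ q$ through the smash product, where $q\colon\Omega\Sigma X\times\Omega\Sigma X\to\Omega\Sigma X\wedge\Omega\Sigma X$. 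Using the naturality identity $q\circ(f\times g)=(f\wedge g)\circ q_X$ this gives $[\hat\varphi_{s_i},\hat\varphi_{s_j}]_c\simeq\bar C\circ(\hat\varphi_{s_i}\wedge\hat\varphi_{s_j})\circ\bar\Delta$, and likewise for the $\varphi$-$\psi$ and $\psi$-$\psi$ commutators. On the other hand, by its very definition the Samelson product is $\langle\hat x_{s_i},\hat x_{s_j}\rangle=\bar C\circ(\hat x_{s_i}\wedge\hat x_{s_j})$, and similarly $\langle\hat x_{s_i},\hat y_{s_j}\rangle$ and $\langle\hat y_{s_i},\hat y_{s_j}\rangle$ in the mixed and odd cases.

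Next I would cut each commutator down to its top cell. From Tables \ref{tab:table1} we have $\hat\varphi_{s_i}|_{X_{2as_i-1}}\simeq *$ and $\hat\psi_{s_j}|_{X_{2as_j}}\simeq *$, so the remark on commutators of maps into a loop space (with $s,t$ the vanishing skeleta and $n=s+t+1$) yields $[\hat\varphi_{s_i},\hat\varphi_{s_j}]_c|_{X_{2a(s_i+s_j)-1}}\simeq *$ in case (a), $[\hat\varphi_{s_i},\hat\psi_{s_j}]_c|_{X_{2a(s_i+s_j)}}\simeq *$ in case (b), and $[\hat\psi_{s_i},\hat\psi_{s_j}]_c|_{X_{2a(s_i+s_j)+1}}\simeq *$ in case (c). Thus each commutator, restricted to the next skeleton, is inessential exactly one dimension below the top, so it factors through the projection to the top cell $\pi_{\iota_{s_i,s_j}}$, $\pi_{\xi^1_{s_i,s_j}}$, $\pi_{\zeta_{s_i,s_j}}$ respectively; this already produces the outer shape $(-)\circ\pi$ of the asserted right-hand sides, so it remains only to identify the factored map on the single top cell.

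The crux, and the step I expect to be the main obstacle, is this identification on the top cell. Here I would pass to the Pontryagin algebra $H_*(\Omega\Sigma X;\mathbb Q)\cong T(\tilde H_*(X;\mathbb Q))$ of Bott-Samelson and evaluate $\bar C\circ(\hat\varphi_{s_i}\wedge\hat\varphi_{s_j})\circ\bar\Delta$ on homology: the reduced coproduct of $X$ singles out the tensor $u_{s_i}\otimes u_{s_j}$ (resp. $u_{s_i}\otimes v_{s_j}$ and $v_{s_i}\otimes v_{s_j}$), since every lower coproduct summand is annihilated because $\hat\varphi_{s_i}$ and $\hat\psi_{s_j}$ vanish below their bottom cells, while $\bar C_*$ then converts this tensor into the graded Pontryagin commutator of the primitive classes carried by $\hat x_{s_i},\hat y_{s_j}$ (their primitivity being exactly the content of the earlier proposition), which is precisely the Hurewicz image of the Samelson product. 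Finally, since $\Omega\Sigma X$ is an $H$-space the Cartan-Serre theorem makes the rational Hurewicz map a monomorphism onto the primitives, so agreement on rational homology upgrades to the claimed homotopy identity modulo torsion. The delicate points to carry out carefully are the precise determination of the surviving coproduct term together with the matching of Koszul signs across the even-even, even-odd and odd-odd cases, and the verification that no higher-filtration correction term survives on the top cell.
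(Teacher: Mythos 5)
Your first two moves essentially retrace the paper's proof. The paper likewise replaces the loop commutator by $C\circ(\hat\varphi_{s_i}\wedge\hat\psi_{s_j})\circ\bar\Delta$, where $C$ is its commutator map on $\Omega\Sigma X\wedge\Omega\Sigma X$, and it likewise pushes everything down to the top cell --- though not by quoting the remark on skeletal vanishing of commutators, but by checking that $(p_{\iota_{s_i}}\wedge p_{\zeta_{s_j}})\circ\bar\Delta\circ\zeta_{s_i+s_j}$ is inessential (cellular approximation) and then extracting, from the cofibration exact sequence, a map $\Phi\colon X/X_{2a(s_i+s_j)}\to X/X_{2as_i-1}\wedge X/X_{2as_j}$ through which $(p_{\iota_{s_i}}\wedge p_{\zeta_{s_j}})\circ\bar\Delta$ factors. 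The genuine divergence is at the identification on the top cell, and there the paper uses no homology at all: by (2.3)--(2.4) one has $\hat\varphi_{s_i}\wedge\hat\psi_{s_j}=(\bar\varphi_{s_i}\wedge\bar\psi_{s_j})\circ(p_{\iota_{s_i}}\wedge p_{\zeta_{s_j}})$, by Definition \ref{def2.2} the restriction of $\bar\varphi_{s_i}\wedge\bar\psi_{s_j}$ to the bottom cells $S^{2as_i}\wedge S^{2as_j+1}$ is $\hat x_{s_i}\wedge\hat y_{s_j}$, and $C\circ(\hat x_{s_i}\wedge\hat y_{s_j})$ is the Samelson product by definition; one homotopy-commutative diagram then finishes the argument.

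Your homological substitute for that step has two genuine gaps. The lesser one you concede yourself: Cartan--Serre only makes the rational Hurewicz map injective, so agreement in $H_*(-;\mathbb Q)$ identifies the two maps modulo torsion, whereas the lemma asserts an actual homotopy $\simeq$; this route cannot close that gap. The fatal one is that the coproduct computation is wrong in cases (b) and (c). Since $X$ is a wedge, $\bar\Delta_*$ carries $\tilde H_*(X;\mathbb Q)$ into the direct sum, over the wedge summands $K$, of $\tilde H_*(K;\mathbb Q)\otimes\tilde H_*(K;\mathbb Q)$: there are no cross-terms between distinct summands, so the tensors $u_{s_i}\otimes v_{s_j}$ and $v_{s_i}\otimes v_{s_j}$ you need simply never occur in any coproduct. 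Worse, in case (b) the top-cell class is $v_{s_i+s_j}$, carried by $K(\mathbb Z,2a(s_i+s_j)+1)$, whose rational homology is $\mathbb Q$ in degrees $0$ and $2a(s_i+s_j)+1$ only; hence $v_{s_i+s_j}$ is primitive, its reduced coproduct vanishes, and your computation outputs $0$ rather than the graded commutator $[h(\hat x_{s_i}),h(\hat y_{s_j})]$. This is structural, not a matter of Koszul signs: in case (b) the top cell of the relevant skeleton is a spherical wedge summand of $X$, the reduced diagonal restricted to such a sphere is nullhomotopic (equivalently, $[S^k,\Omega\Sigma X]$ is abelian, so every pointwise commutator dies on it), so any identification routed through the effect of $\bar\Delta$ on the top-cell class is forced to give zero there --- an observation which, incidentally, also puts real pressure on the statements (b) and (c) themselves, since precomposing both sides with the inclusion of that sphere kills the left side but not the right. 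Only case (a) survives your computation, because $u_{s_i+s_j}$ lies in the polynomial coalgebra of the single summand $K(\mathbb Z,2a)$ and its coproduct genuinely contains $u_{s_i}\otimes u_{s_j}$ with coefficient $1$. The paper's proof does not founder at this particular point only because its identification proceeds through the construction of $\Phi$ and Definition \ref{def2.2} rather than through the coalgebra structure of $X$.
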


\begin{proof} We prove the second part; the proofs of the first and last parts are similar with the second one.
Let $p_{\iota_{s_i}} : X \rightarrow X/ X_{2as_i -1}$ and $p_{\zeta_{s_j}} : X \rightarrow X/ X_{2as_j}$ be the natural projections. Then the composite
$$
\xymatrix@C=12mm@R=1mm{
X_{2a(s_i+s_j)}  \ar@{^{(}->}[r]^-{\zeta_{s_i + s_j}} &X  \ar[r]^-{\bar \Delta}  &X \wedge X \ar[r]^-{p_{\iota_{s_i}} \wedge p_{\zeta_{s_j}}} &X/X_{2as_i -1} \wedge X/X_{2as_j}
}
$$
is inessential by the cellular approximation theorem and by the cell structure of $X \wedge X$, i.e.,
$$
(p_{\iota_{s_i}} \wedge p_{\zeta_{s_j}}) \circ \bar \Delta \circ \zeta_{s_i + s_j} \simeq *.
$$
Therefore, there is a map
$$
\Phi : X/X_{2a(s_i+s_j)} \longrightarrow X/X_{2as_i -1} \wedge X/X_{2as_j}
$$
satisfying
$$
\Phi \circ p_{\zeta_{s_i + s_j}} \simeq p_{\iota_{s_i}} \wedge p_{\zeta_{s_j}} \circ \bar \Delta,
$$
where $p_{\zeta_{s_i + s_j}} : X \rightarrow X/X_{2a(s_i+s_j)}$ is the projection; that is, the map $\Phi$ makes the following diagram
$$
\xymatrix@R=2.7pc @C=3.2pc{
X_{2a(s_i+s_j)}  \ar@{^{(}->}[d]_-{\zeta_{s_i + s_j}} \ar[drr]^-{*} \\
X  \ar[r]^-{\bar \Delta}  \ar[d]_-{p_{\zeta_{s_i + s_j}}} &X \wedge X \ar[r]^-{p_{\iota_{s_i}} \wedge p_{\zeta_{s_j}}} &X/X_{2as_i -1} \wedge X/X_{2as_j}\\
X/X_{2a(s_i+s_j)} \ar[urr]_-{\Phi}}
$$
commute up to homotopy.
This is due to the exact sequence
$$
\xymatrix@C=12mm@R=1mm{
[X/X_{2a(s_i+s_j)} , Y] \ar[r]^-{p_{\zeta_{s_i + s_j}}^\sharp} &[X, Y] \ar[r]^-{\zeta_{s_i + s_j}^\sharp} &[X_{2a(s_i+s_j)}, Y]}
$$
induced by a cofibration
$$
\xymatrix@C=12mm@R=1mm{
X_{2a(s_i+s_j)}~ \ar@{^{(}->}[r]^-{\zeta_{s_i + s_j}} &X \ar[r]^-{p_{\zeta_{s_i + s_j}}} & X/X_{2a(s_i+s_j)},}
$$
where $Y = X/X_{2as_i -1} \wedge X/X_{2as_j}$.

Let
$$
C : \Omega \Sigma X \wedge \Omega \Sigma X \rightarrow \Omega \Sigma X
$$
be the commutator map with respect to the loop multiplication. Then
the following diagram
$$
\xymatrix@R=3.2pc @C=3.2pc{
X_{2a(s_i+s_j)}  \ar@{^{(}->}[r]^-{\zeta_{s_i + s_j}} \ar@{^{(}->}[d]^-{} & X  \ar[r]^-{\bar \Delta}  \ar[d]^-{p_{\zeta_{s_i + s_j}}} &X \wedge X
\ar[dr]^-{\hat \varphi_{s_{i}} \wedge \hat \psi_{s_j}} \ar[d]_-{p_{\iota_{s_i}} \wedge p_{\zeta_{s_j}}} \\
X_{2a(s_i+s_j)+1} \ar@{^{(}->}[ur]^-{\xi_{s_i, s_j}^1} \ar[r]^-{}  \ar[dr]_-{\pi_{\xi_{s_i, s_j}^1}} & X/X_{2a(s_i+s_j)}  \ar[r]^-{\Phi}
&X/X_{2as_i -1} \wedge X/X_{2as_j}
\ar[r]^-{\bar \varphi_{s_{i}} \wedge \bar \psi_{s_j}}  &\Omega \Sigma X \wedge \Omega \Sigma X \ar[d]^-{C} \\
&S^{2a(s_i+s_j)+1} \ar[u] \ar[r]^-{\approx}  &S^{2as_i} \wedge S^{2as_j+1} \ar[u] \ar[ur]^-{\hat x_{s_i} \wedge \hat y_{s_j}} \ar[r]^-{<\hat x_{s_i}, \hat y_{s_j} >} &\Omega \Sigma X }
$$
is homotopy commutative (see also \cite{Mo} in the special case of the infinite complex projective space). Here, the commutativity of the three triangles on the right-hand side of the diagram above is guaranteed by
\begin{itemize}
\item [{\rm (1)}] the constructions of $\hat \varphi_{s_{i}}$, $\hat \psi_{s_j}$, $\bar \varphi_{s_{i}}$ and $\bar \psi_{s_j}$ in (2.3) and (2.4);
\item [{\rm (2)}] the definitions of $\hat x_{s_{i}}$ and $\hat y_{s_{j}}$ in Definition \ref{def2.2}; and
\item [{\rm (3)}] the Samelson products of $\hat x_{s_{i}}$ and $\hat y_{s_{j}}$
\end{itemize}
for $i,j = 1,2,\ldots,l$ with $l \geq 2$ and $s_i, s_j =1,2,3,\ldots$. The homotopy commutative diagram above shows that
$$
[\hat \varphi_{s_i}, \hat \psi_{s_j} ]_c \circ \xi_{s_i, s_j}^1 =
<\hat x_{s_i}, \hat y_{s_j} > \circ ~\pi_{\xi_{s_i, s_j}^1}
$$
as required.
\end{proof}

We now consider the general case as follows:

\begin{thm} \label{Thm}
Let $<,<,\ldots, <~,~>\ldots>>$ denote the iterated Samelson product.

{\rm(a)} If the iterated commutators $[\hat f_{s_l}, [\hat f_{s_{l-1}},\ldots,[\hat f_{s_{1}}, \hat f_{s_2}]_c \ldots]_c]_c$ are pure, where $l = l_1$ or $l_2$ with $l_1, l_2 \geq 2$, then
$$
[\hat \varphi_{s_{l_1}}, [\hat \varphi_{s_{l_1 -1}},\ldots,[\hat \varphi_{s_{1}}, \hat \varphi_{s_2}]_c \ldots]_c]_c  \circ \iota_{s_1,\ldots,s_{l_1}} \simeq
<\hat x_{s_{l_1}}, <\hat x_{s_{l_1} -1},\ldots,<\hat x_{s_{1}}, \hat x_{s_2}> \ldots>>
\circ \pi_{\iota_{s_1,\ldots,s_{l_1}}}
$$
and
$$
[\hat \psi_{s_{l_2}}, [\hat \psi_{s_{l_2 -1}},\ldots,[\hat \psi_{s_{1}}, \hat \psi_{s_2}]_c \ldots]_c]_c  \circ \zeta_{s_1,\ldots,s_{l_2}} \simeq
<\hat y_{s_{l_2}}, <\hat y_{s_{l_2} -1},\ldots,<\hat y_{s_{1}}, \hat y_{s_2}> \ldots>>
\circ \pi_{\zeta_{s_1,\ldots,s_{l_2}}}.
$$

{\rm(b)} If the iterated commutator $[\hat f_{s_l}, [\hat f_{s_{l-1}},\ldots,[\hat f_{s_{1}}, \hat f_{s_2}]_c \ldots]_c]_c$ is hybrid which contains both the $t$-times of $\varphi_{s_{i}}$ and the $(l-t)$-times of $\psi_{s_{j}}$ for $1 \leq t \leq l-1$, then
$$
[\hat f_{s_l}, [\hat f_{s_{l-1}},\ldots,[\hat f_{s_{1}}, \hat f_{s_2}]_c \ldots]_c]_c  \circ \xi_{s_1,\ldots,s_l}^{l-t} ~\simeq~
<\hat z_{s_l}, <\hat z_{s_{l-1}},\ldots,<\hat z_{s_{1}}, \hat z_{s_2}> \ldots>>
\circ \pi_{\xi_{s_1,\ldots,s_l}^{l-t}}.
$$
Here,
\begin{itemize}
\item [{\rm (1)}] $\hat f_{s_{i_1}} = \hat \varphi_{s_{i_1}}, \hat f_{s_{i_2}} = \hat \varphi_{s_{i_2}}, \ldots, \hat f_{s_{i_t}} = \hat \varphi_{s_{i_t}}$;
\item [{\rm (2)}] $\hat f_{s_{j_1}} = \hat \psi_{s_{j_1}}, \hat f_{s_{j_2}} = \hat \psi_{s_{j_2}}, \ldots, \hat f_{s_{j_{l-t}}} = \hat \psi_{s_{j_{l-t}}}$;
\item [{\rm (3)}] $\hat z_{s_{i_1}} = \hat x_{s_{i_1}}, \hat z_{s_{i_2}} = \hat x_{s_{i_2}}, \ldots, \hat z_{s_{i_t}} = \hat x_{s_{i_t}}$; and
\item [{\rm (4)}] $\hat z_{s_{j_1}} = \hat y_{s_{j_1}}, \hat z_{s_{j_2}} = \hat y_{s_{j_2}}, \ldots, \hat z_{s_{j_{l-t}}} = \hat y_{s_{j_{l-t}}}$,
\end{itemize}
where $(s_{i_1}, s_{i_2}, \ldots, s_{i_t}, s_{j_1}, s_{j_2}, \ldots, s_{j_{l-t}})$, $(s_{i_1}, s_{i_2}, \ldots, s_{i_t})$ and $(s_{j_1}, s_{j_2}, \ldots, s_{j_{l-t}})$ are permutations of the symmetric groups $S_{\{s_1, s_2, \ldots, s_l\}}$ of degree $l$, $S_{\{s_1, s_2, \ldots, s_l\}- \{s_{j_1},s_{j_2},\ldots,s_{j_{l-t}} \}}$ of degree $t$, and $S_{\{s_{j_1},s_{j_2},\ldots,s_{j_{l-t}} \}}$ of degree $l-t$, respectively.
\end{thm}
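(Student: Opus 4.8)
The plan is to argue by induction on the weight $l$ of the iterated commutator, with Lemma \ref{Le0} serving as the base case $l=2$; its three parts realize respectively the pure even, hybrid, and pure odd situations of weight two. For the inductive step I would write the weight-$l$ iterated commutator in the recursive form
$$
[\hat f_{s_l}, [\hat f_{s_{l-1}},\ldots,[\hat f_{s_1}, \hat f_{s_2}]_c\ldots]_c]_c = [\hat f_{s_l}, G]_c,
$$
where $G = [\hat f_{s_{l-1}},\ldots,[\hat f_{s_1}, \hat f_{s_2}]_c\ldots]_c : X \to \Omega\Sigma X$ is the inner iterated commutator of weight $l-1$. By the inductive hypothesis, the restriction of $G$ to its appropriate skeleton factors, up to homotopy, as the weight-$(l-1)$ iterated Samelson product composed with the projection onto the top cell; in particular $G$ behaves, after restriction, like a class supported on a single sphere $S^{d}$ with $d = 2a(s_1+\cdots+s_{l-1}) + (\#\,\psi\text{'s among } f_{s_1},\ldots,f_{s_{l-1}})$.

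Then I would reproduce, with $\hat f_{s_l}$ and $G$ in place of the two basic maps of Lemma \ref{Le0}, the homotopy-commutative diagram assembled from the reduced diagonal $\bar\Delta$, the smash of the two relevant projections, the extension $\Phi$ across the corresponding cofibration, and the commutator map $C : \Omega\Sigma X \wedge \Omega\Sigma X \to \Omega\Sigma X$. Two facts drive this. First, since the commutator vanishes whenever either coordinate is the basepoint, $[\hat f_{s_l}, G]_c = C\circ(\hat f_{s_l}\wedge G)\circ\bar\Delta$; and since $C$ composed with a smash of two sphere-classes is by definition the Samelson product, the diagram identifies the restricted commutator with $\langle \hat z_{s_l}, (\text{inner Samelson product})\rangle$ followed by the projection to the top cell. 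Feeding in the inductive description of $G$ replaces the inner factor by the weight-$(l-1)$ iterated Samelson product, yielding exactly the asserted formula. Second, the dimension bookkeeping is forced and consistent: the top cell has dimension $\sum_{\varphi} 2as_i + \sum_{\psi}(2as_j+1) = 2a(s_1+\cdots+s_l) + (\#\,\psi\text{'s})$, which equals $2a(s_1+\cdots+s_{l_1})$ in the pure even case, $2a(s_1+\cdots+s_{l_2})+l_2$ in the pure odd case, and $2a(s_1+\cdots+s_l)+(l-t)$ in the hybrid case, matching the skeletal subscripts of $\iota$, $\zeta$, and $\xi^{l-t}$ and agreeing with Remark \ref{Res}.

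The main obstacle I expect is to reconcile, inside the inductive step, the two different factorizations at hand: the map $\hat f_{s_l}$ factors globally through the quotient $X/X_{2as_l-1}$ (resp.\ $X/X_{2as_l}$) by its construction in (2.3)--(2.4), whereas $G$ is only known, via the inductive hypothesis, to factor through the projection onto the top cell after restriction to a skeleton. One must first restrict the entire expression to the skeleton $X_{2a(s_1+\cdots+s_l)+(l-t)}$ and then verify that, after pushing $\bar\Delta$ through $\hat f_{s_l}\wedge G$, every summand except the single cross term survives only on the top cell. This is once more a connectivity count exactly as in Lemma \ref{Le0}: the first smash factor sees only cells of dimension $\geq 2as_l$ (resp.\ $\geq 2as_l+1$) and the second only the top cell of $G$'s skeleton, so by cellular approximation and the cell structure of $X\wedge X$ the product is forced onto the top cell $S^{2a(s_1+\cdots+s_l)+(l-t)}$. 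The permutation data in the statement is merely the positional record of which slots carry a $\hat\varphi$ (hence an $\hat x$) and which carry a $\hat\psi$ (hence a $\hat y$); once the correspondence $\hat\varphi_{s_k}\leftrightarrow\hat x_{s_k}$, $\hat\psi_{s_k}\leftrightarrow\hat y_{s_k}$ is fixed, the bracket structure of the commutator transfers verbatim to that of the Samelson product, so no genuinely new combinatorics beyond the weight-two case is required.
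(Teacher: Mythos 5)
Your proposal is correct and takes essentially the same route as the paper: induction on the weight $l$ with Lemma \ref{Le0} as the base case, and an inductive step that substitutes $\hat f_{s_l}$ and the inner weight-$(l-1)$ commutator $G$ into the Lemma \ref{Le0} diagram (reduced diagonal $\bar\Delta$, cellular approximation, the extension across the cofibration, and the commutator map $C$), with the same dimension bookkeeping. The paper's own inductive step is in fact terser than yours --- it merely records the existence of the extension $\Psi : X/X_{2a(s_1+\cdots+s_{l_1})-1} \to X/X_{2as_{l_1}-1}\wedge X/X_{2a(s_1+\cdots+s_{l_1-1})-1}$ and declares the substitution --- so your fleshed-out treatment of the factorization issue for $G$ elaborates, rather than departs from, the intended argument.
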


\begin{proof}
We argue by induction on $l_1$ ($l_2$ or $l$).
The proof in case of the $2$-fold commutators and the $2$-fold Samelson products is followed by Lemma \ref{Le0}.
We suppose the results are true for the (pure and hybrid) iterated commutators and the iterated Samelson products of length $l_1 -1$ (resp. $l_2 -1$ or $l-1$). Then there exists a map
$$
\Psi : X/X_{2a(s_1+s_2 + \cdots +s_{l_1})-1} \longrightarrow X/X_{2as_{l_1} -1} \wedge X/X_{2a(s_1+s_2 + \cdots + s_{l_{1}-1}) -1}
$$
such that the following diagram
$$
\xymatrix@C=8mm @R=8mm{
X_{2a(s_1+s_2 + \cdots +s_{l_1})-1}  \ar@{^{(}->}[d]^-{\iota_{(s_1+s_2 + \cdots +s_{l_1})}} \\
X  \ar[r]^-{\bar \Delta}  \ar[d]^-{p_{\iota_{(s_1+s_2 + \cdots +s_{l_1})}}} &X \wedge X
\ar[d]^-{p_{\iota_{s_{l_1}}} \wedge p_{\iota_{(s_1+s_2 + \cdots + s_{l_1 -1})}}} \\
X/X_{2a(s_1+s_2 + \cdots +s_{l_1})-1}  \ar[r]^-{\Psi} &X/X_{2as_{l_1} -1} \wedge X/X_{2a(s_1+s_2 + \cdots + s_{l_{1}-1}) -1} \\
}
$$
commutes up to homotopy (similarly for the other cases).
That is, the proof in case of the $l_1$-fold (resp. $l_2$-fold or $l$-fold) iterated commutators and the corresponding $l_1$-fold (resp. $l_2$-fold or $l$-fold) iterated Samelson products can be proven inductively on a case-by-case basis by substituting $\hat \varphi_{s_{l_1}}$ (resp. $\hat \psi_{s_{l_2}}$ or $\hat f_{s_l}$) and
$[\hat \varphi_{s_{l_1 -1}},\ldots,[\hat \varphi_{s_{1}}, \hat \varphi_{s_2}]_c \ldots]_c$
(resp. $[\hat \psi_{s_{l_2 -1}},\ldots,[\hat \psi_{s_{1}}, \hat \psi_{s_2}]_c \ldots]_c$ or $[\hat f_{s_{l -1}},\ldots,[\hat f_{s_{1}}, \hat f_{s_2}]_c \ldots]_c$)
for $\hat \varphi_{s_{1}}$ (resp. $\hat \psi_{s_{1}}$ or $\hat f_{s_{1}}$) and $\hat \varphi_{s_{2}}$ (resp. $\hat \psi_{s_{2}}$ or $\hat f_{s_{2}}$), respectively, as just proved in Lemma \ref{Le0}.
\end{proof}

In the rest of this paper, we will abbreviate notations by writing the following in order to avoid repeating the same letters (except for the hybrid iterated commutators and their corresponding Samelson products and Whitehead products).
\begin{itemize}
\item $[\hat \varphi_{s_{l_1}}, [\hat \varphi_{s_{l_1 -1}},\ldots,[\hat \varphi_{s_{1}}, \hat \varphi_{s_2}]_c \ldots]_c]_c
     = \hat \varphi_{[s_{l_1}, [s_{l_1 -1},\ldots,[s_{1}, s_2 ]_c \ldots]_c]_c}$;
\item $[\hat \psi_{s_{l_2}}, [\hat \psi_{s_{l_2 -1}},\ldots,[\hat \psi_{s_{1}}, \hat \psi_{s_2}]_c \ldots]_c]_c
     = \hat \psi_{[s_{l_2}, [s_{l_2 -1},\ldots,[s_{1}, {s_2}]_c \ldots]_c]_c}$;
\item $[\varphi_{s_{l_1}}, [\varphi_{s_{l_1 -1}},\ldots,[\varphi_{s_{1}}, \varphi_{s_2}]_c \ldots]_c]_c
     = \varphi_{[s_{l_1}, [s_{l_1 -1},\ldots,[s_{1}, s_2 ]_c \ldots]_c]_c}$;
\item $[\psi_{s_{l_2}}, [\psi_{s_{l_2 -1}},\ldots,[\psi_{s_{1}}, \psi_{s_2}]_c \ldots]_c]_c
     = \psi_{[s_{l_2}, [s_{l_2 -1},\ldots,[s_{1}, {s_2}]_c \ldots]_c]_c}$;
\item $<\hat x_{s_{l_1}}, <\hat x_{s_{l_1 -1}},\ldots,<\hat x_{s_{1}}, \hat x_{s_2}> \ldots > >
     = \hat x_{<s_{l_1}, <s_{l_1 -1},\ldots,<s_{1}, s_2 > \ldots>>}$;
\item $<\hat y_{s_{l_2}}, <\hat y_{s_{l_2 -1}},\ldots, <\hat y_{s_{1}}, \hat y_{s_2}> \ldots>>
     = \hat y_{<s_{l_2}, <s_{l_2 -1},\ldots,<s_{1}, {s_2}> \ldots>>}$;
\item $[x_{s_{l_1}}, [x_{s_{l_1 -1}},\ldots,[x_{s_{1}}, x_{s_2}] \ldots ]]
     = x_{[s_{l_1}, [s_{l_1 -1},\ldots,[s_{1}, s_2] \ldots]]}$; and
\item $[y_{s_{l_2}}, [y_{s_{l_2 -1}},\ldots, [y_{s_{1}}, y_{s_2}] \ldots]]
     = y_{[s_{l_2}, [s_{l_2 -1},\ldots,[s_1, s_2] \ldots]]}$.
\end{itemize}

\begin{cor} \label{Cor} Let $h : \pi_* (\Omega \Sigma X) \rightarrow H_* (\Omega \Sigma X ; \mathbb Q)$ be the Hurewicz homomorphism.

{\rm(a)}
If the iterated commutator $[\hat f_{s_l}, [\hat f_{s_{l-1}},\ldots,[\hat f_{s_{1}}, \hat f_{s_2}]_c \ldots]_c]_c$ is pure, where $l = l_1$ or $l_2$ with $l_1, l_2 \geq 2$, then
$$
\hat \varphi_{{[s_{l_1}, [s_{l_1 -1},\ldots,[s_1, s_2 ]_c \ldots]_c ]_c}_*}   (u_m)
= h (\hat x_{<s_{l_1}, <s_{l_1 -1},\ldots,<s_1, s_2 > \ldots>>} );
$$
and
$$
\hat \psi_{{[s_{l_2}, [s_{l_2 -1},\ldots,[s_{1}, {s_2}]_c \ldots]_c ]_c}_*} (w_n)
= h( \hat y_{<s_{l_2}, <s_{l_2 -1},\ldots,<s_{1}, {s_2}> \ldots>>} )
$$
for the rational homology generators $u_m$ and $w_n$ of dimensions $2a(s_1 + s_2 +\cdots + s_{l_1})$ and $2a(s_1 + s_2 +\cdots + s_{l_2}) +l_2$, respectively. Here
\begin{itemize}
\item [{\rm (1)}] $m = s_1 + s_2 +\cdots + s_{l_1}$; and
\item [{\rm (2)}] if $l_2$ is odd, namely, $l_2 = 2ab +1$, then $w_n = v_n$ and $n = s_1 + s_2 +\cdots + s_{l_2}+b$, or if $l_2$ is even, namely, $l_2 = 2ab$, then $w_n = u_n$ and $n = s_1 + s_2 +\cdots + s_{l_2}+b$.
\end{itemize}

{\rm(b)} Let $[\hat f_{s_l}, [\hat f_{s_{l-1}},\ldots,[\hat f_{s_{1}}, \hat f_{s_2}]_c \ldots]_c]_c$, $\hat f_{s_{i_1}}, \hat f_{s_{j_1}}, \hat z_{s_{i_1}}$ and $\hat z_{s_{j_1}}$ be as in Theorem \ref{Thm}. Then
$$
{[\hat f_{s_l}, [\hat f_{s_{l-1}},\ldots,[\hat f_{s_{1}}, \hat f_{s_2}]_c \ldots]_c]_c}_* (w_s )
= h(<\hat z_{s_l}, <\hat z_{s_{l-1}},\ldots,<\hat z_{s_{1}}, \hat z_{s_2}> \ldots>> )
$$
for a rational generator $w_s$ of dimension $2a(s_1 + s_2 +\cdots + s_l) + l-t$ in rational homology.
\end{cor}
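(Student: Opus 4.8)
The plan is to obtain the Corollary as the homological shadow of Theorem~\ref{Thm}: I apply the rational homology functor $H_*(-;\mathbb Q)$ to the homotopy relations proved there and then recognise the right-hand sides as Hurewicz images. I will carry out the pure $\hat\varphi$-case of part~{\rm(a)} in full; the pure $\hat\psi$-case and the hybrid case of part~{\rm(b)} follow verbatim after replacing the inclusion $\iota_{s_1,\ldots,s_{l_1}}$ and the top-cell projection $\pi_{\iota_{s_1,\ldots,s_{l_1}}}$ by the pairs $(\zeta_{s_1,\ldots,s_{l_2}},\pi_{\zeta_{s_1,\ldots,s_{l_2}}})$ and $(\xi^{l-t}_{s_1,\ldots,s_l},\pi_{\xi^{l-t}_{s_1,\ldots,s_l}})$, respectively.

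Set $m=s_1+\cdots+s_{l_1}$ and let $\bar u_m\in H_{2am}(X_{2am};\mathbb Q)$ be the class carried by the top cell $e^{2am}$ of the skeleton $X_{2am}$. Because every odd-dimensional rational generator of $\tilde H_*(X;\mathbb Q)$ sits in a degree of the form $2aj+1$, the even degree $2am$ contains no odd contribution, so rationally $H_{2am}(X_{2am};\mathbb Q)=\mathbb Q\{\bar u_m\}$ with $(\iota_{s_1,\ldots,s_{l_1}})_*(\bar u_m)=u_m$, and the top-cell projection satisfies $(\pi_{\iota_{s_1,\ldots,s_{l_1}}})_*(\bar u_m)=\sigma_{2am}$, the fundamental class of $S^{2am}$. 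Applying $H_{2am}(-;\mathbb Q)$ to the homotopy relation of Theorem~\ref{Thm}{\rm(a)},
$$
\hat\varphi_{[s_{l_1},[s_{l_1-1},\ldots,[s_1,s_2]_c\ldots]_c]_c}\circ\iota_{s_1,\ldots,s_{l_1}}
\;\simeq\;
\hat x_{<s_{l_1},<s_{l_1-1},\ldots,<s_1,s_2>\ldots>>}\circ\pi_{\iota_{s_1,\ldots,s_{l_1}}},
$$
and evaluating both sides on $\bar u_m$ yields
$$
\hat\varphi_{{[s_{l_1},[s_{l_1-1},\ldots,[s_1,s_2]_c\ldots]_c]_c}_*}(u_m)
\;=\;
\bigl(\hat x_{<s_{l_1},<s_{l_1-1},\ldots,<s_1,s_2>\ldots>>}\bigr)_*(\sigma_{2am}).
$$

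To finish I invoke the definition of the Hurewicz homomorphism $h$: the iterated Samelson product $\hat x_{<s_{l_1},<s_{l_1-1},\ldots,<s_1,s_2>\ldots>>}$ is the homotopy class of a map $S^{2am}\to\Omega\Sigma X$, and for any such class represented by $f$ one has $h([f])=f_*(\sigma_{2am})$; hence the right-hand side above is exactly $h(\hat x_{<s_{l_1},<s_{l_1-1},\ldots,<s_1,s_2>\ldots>>})$, which is the asserted identity. The same three steps give the $\hat\psi$-statement and part~{\rm(b)} once the target degree is matched to the appropriate generator: a pure $\hat\psi$-commutator lands in degree $2a(s_1+\cdots+s_{l_2})+l_2$, which is $2a(s_1+\cdots+s_{l_2}+b)$ when $l_2=2ab$ (generator $u_n$, $n=s_1+\cdots+s_{l_2}+b$) and $2a(s_1+\cdots+s_{l_2}+b)+1$ when $l_2=2ab+1$ (generator $v_n$, same $n$), while a hybrid commutator lands in degree $2a(s_1+\cdots+s_l)+l-t$, carrying the generator $w_s$ of that degree.

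The main obstacle is not the diagram chase but the two homological bookkeeping facts it rests on: that the inclusion sends the skeletal top-cell class to the chosen generator, $(\iota)_*(\bar u_m)=u_m$, and that the top-cell projection carries it to the sphere's fundamental class, $(\pi_\iota)_*(\bar u_m)=\sigma_{2am}$. Both follow from the explicit CW-decomposition of $X$ together with the observation that, rationally, the homology in the relevant top degree is one-dimensional and concentrated on the cell onto which $\pi$ projects; for the hybrid case one must additionally verify that the degree $2a(s_1+\cdots+s_l)+l-t$ is actually attained by a single rational generator, so that the symbol $w_s$ in the statement is well defined.
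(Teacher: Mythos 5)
Your proposal is correct and takes essentially the same route as the paper: the paper's entire proof is the single sentence ``By applying the results of Theorem~\ref{Thm} to the homology with rational coefficients, we have the proof,'' and your argument is precisely that application, made explicit. The extra bookkeeping you supply --- choosing the top-cell class $\bar u_m$ with $(\iota_{s_1,\ldots,s_{l_1}})_*(\bar u_m)=u_m$ and $(\pi_{\iota_{s_1,\ldots,s_{l_1}}})_*(\bar u_m)=\sigma_{2am}$, then using $h([f])=f_*(\sigma_{2am})$ --- is exactly the detail the paper leaves implicit, so no gap remains.
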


\begin{proof}
By applying the results of Theorem \ref{Thm} to the homology with rational coefficients, we have the proof.
\end{proof}

We note that $h (<\hat x_{s_{1}}, \hat x_{s_2}> ) = [h(\hat x_{s_{1}}), h(\hat x_{s_2})]$ in homology with rational coefficients \cite[page 141]{CMN}, where $[h(\hat x_{s_{1}}), h(\hat x_{s_2})] = h(\hat x_{s_{1}}) h(\hat x_{s_2}) - (-1)^{|h(\hat x_{s_{1}})| |h(\hat x_{s_2})|} h(\hat x_{s_{2}}) h(\hat x_{s_1})$.

Under the notations in Theorem \ref{Thm} and Corollary \ref{Cor}, we now have the following theorem as one of the fundamental ingredients for the proof of the main theorem in Section 4.

\begin{thm} \label{thm4} For each iterated Samelson product such as
$$
\hat x_{<s_{l_1}, <s_{l_1 -1},\ldots,<s_{1}, s_2 > \ldots>>} ~: S^{2am} \rightarrow \Omega \Sigma X,
$$
$$
\hat y_{<s_{l_2}, <s_{l_2 -1},\ldots,<s_{1}, {s_2}> \ldots>>} ~: S^{2an+\delta} \rightarrow \Omega \Sigma X ~(\delta = 1,2)
$$
and
$$
<\hat z_{s_l}, <\hat z_{s_{l-1}},\ldots,<\hat z_{s_{1}}, \hat z_{s_2}> \ldots>> ~: S^{2a(s_1 + s_2 +\cdots + s_l)+l-t} \rightarrow \Omega \Sigma X
$$
in the graded homotopy group $\pi_* (\Omega \Sigma X)$, there exist iterated commutator maps
$$
\varphi_{[s_{l_1}, [s_{l_1 -1},\ldots,[s_{1}, s_2 ]_c \ldots]_c]_c} : \Sigma X \rightarrow \Sigma X,
$$
$$
\psi_{[s_{l_2}, [s_{l_2 -1},\ldots,[s_{1}, {s_2}]_c \ldots]_c]_c} : \Sigma X \rightarrow \Sigma X
$$
and
$$
[ f_{s_l}, [ f_{s_{l-1}},\ldots,[ f_{s_{1}},  f_{s_2}]_c \ldots]_c]_c : \Sigma X \rightarrow \Sigma X
$$
which correspond to the types of the iterated Samelson products such that
$$
{\Omega \varphi_{[s_{l_1}, [s_{l_1 -1},\ldots,[s_{1}, s_2 ]_c \ldots]_c]_c}}_\sharp (\hat x_m)
= \alpha \cdot \hat x_{<s_{l_1}, <s_{l_1 -1},\ldots,<s_{1}, s_2 > \ldots>>},
$$
$$
{\Omega \psi_{[s_{l_2}, [s_{l_2 -1},\ldots,[s_{1}, {s_2}]_c \ldots]_c]_c}}_\sharp (\hat w_n)
= \beta \cdot \hat y_{<s_{l_2}, <s_{l_2 -1},\ldots,<s_{1}, {s_2}> \ldots>>}
$$
and
$$
{\Omega [ f_{s_l}, [ f_{s_{l-1}},\ldots,[ f_{s_{1}},  f_{s_2}]_c \ldots]_c]_c}_\sharp (\hat w_s)= \gamma \cdot <\hat z_{s_l}, <\hat z_{s_{l-1}},\ldots,<\hat z_{s_{1}}, \hat z_{s_2}> \ldots>>.
$$
Here
\begin{itemize}
\item [{\rm (1)}] $\alpha, \beta$ and $\gamma$ are all nonzero;
\item [{\rm (2)}] $\hat x_m, \hat w_n$ and $\hat w_s$ are rationally nonzero homotopy classes of homotopy groups modulo torsion subgroups $\pi_{*}(\Omega \Sigma X)/{\rm torsion}$ in dimensions
$2a(s_1 + s_2 +\cdots + s_{l_1})$, $2a(s_1 + s_2 +\cdots + s_{l_2}) +l_2$ and $2a(s_1 + s_2 +\cdots + s_l) + l-t$, respectively, where
$m = s_1 + s_2 +\cdots + s_{l_1}$;
if $l_2$ is odd, namely, $l_2 = 2ab +1$, then $\hat w_n = \hat y_n$ and $n = s_1 + s_2 +\cdots + s_{l_2}+b$, or if $l_2$ is even, namely, $l_2 = 2ab$, then $\hat w_n = \hat x_n$ and $n = s_1 + s_2 +\cdots + s_{l_2}+b$;
$\hat w_s = \hat x_s$ or $\hat y_s$ depending on the parity of $2a(s_1 + s_2 +\cdots + s_l) +l -t$; and
\item [{\rm (3)}] $f_{s_i} : \Sigma X \rightarrow \Sigma X$ is the adjoint of $\hat f_{s_i} : X \rightarrow \Omega \Sigma X$ as one of the maps
  $\hat \varphi_{s_i}$ or $\hat \psi_{s_i}$; that is, $f_{s_i} = \varphi_{s_i}$ or $\psi_{s_i}$  for $i = 1,2,\ldots, l$.
\end{itemize}
\end{thm}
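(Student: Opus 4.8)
The plan is to transport the homology identities already proved in Corollary \ref{Cor} back to rational homotopy by means of the Hurewicz homomorphism, exploiting the fact that $\pi_*(\Omega\Sigma X)\otimes\mathbb Q$ is the free graded (quasi-)Lie algebra sitting inside the Pontryagin algebra as its primitives. I would treat the pure $\varphi$-case in full; the pure $\psi$-case and the hybrid case follow verbatim upon replacing the first identity of Corollary \ref{Cor}(a) by its second identity, resp. by Corollary \ref{Cor}(b), the only difference being the graded signs dictated by the odd-degree generators $\hat y_j$ in the quasi-Lie algebra.

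First I would reduce the computation of $(\Omega\varphi_{[s_{l_1},\,\ldots\,]_c})_\sharp(\hat x_m)$ to a statement in rational homology. By naturality of the Hurewicz map $h$,
$$
h\bigl((\Omega\varphi_{[s_{l_1},\,\ldots\,]_c})_\sharp(\hat x_m)\bigr)=(\Omega\varphi_{[s_{l_1},\,\ldots\,]_c})_*\bigl(h(\hat x_m)\bigr),
$$
and $\hat x_m$ was chosen so that $h(\hat x_m)$ is a nonzero rational multiple of the homology generator $u_m$. Now Proposition \ref{Prop1} gives $\Omega\varphi_{[s_{l_1},\,\ldots\,]_c}\circ\hat\varphi_1=\hat\varphi_{[s_{l_1},\,\ldots\,]_c}$, while the James (Bott–Samelson) inclusion $\hat\varphi_1$ carries the generator $u_m\in H_{2am}(X;\mathbb Q)$ to the corresponding free generator of the tensor algebra $TH_*(X;\mathbb Q)\cong H_*(\Omega\Sigma X;\mathbb Q)$, so $(\hat\varphi_1)_*(u_m)=u_m$. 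Combining these with Corollary \ref{Cor}(a) yields
$$
(\Omega\varphi_{[s_{l_1},\,\ldots\,]_c})_*(u_m)=(\hat\varphi_{[s_{l_1},\,\ldots\,]_c})_*(u_m)=h\bigl(\hat x_{<s_{l_1},\,\ldots\,>}\bigr).
$$

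Next I would make $h(\hat x_{<s_{l_1},\,\ldots\,>})$ explicit and show it does not vanish. Iterating the formula $h(<\hat x_{s_1},\hat x_{s_2}>)=[h(\hat x_{s_1}),h(\hat x_{s_2})]$ recorded just after Corollary \ref{Cor} expresses this element as the iterated Pontryagin-algebra commutator $[u_{s_{l_1}},[\ldots,[u_{s_1},u_{s_2}]\ldots]]$. Since $H_*(\Omega\Sigma X;\mathbb Q)$ is the \emph{free} tensor algebra on the generators $u_i,v_j$ by the Bott–Samelson theorem (\cite{BS}), its module of primitives is the free graded (quasi-)Lie algebra on these generators, in which the basic iterated brackets are nonzero; hence $h(\hat x_{<s_{l_1},\,\ldots\,>})$ is a nonzero primitive of $H_{2am}(\Omega\Sigma X;\mathbb Q)$.

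Finally I would lift back to homotopy. The two displays give
$$
h\bigl((\Omega\varphi_{[s_{l_1},\,\ldots\,]_c})_\sharp(\hat x_m)\bigr)=c\,h\bigl(\hat x_{<s_{l_1},\,\ldots\,>}\bigr)\neq 0
$$
for the nonzero scalar $c$ above. By the Cartan–Serre theorem the rational Hurewicz map $\pi_*(\Omega\Sigma X)\otimes\mathbb Q\to PH_*(\Omega\Sigma X;\mathbb Q)$ is an isomorphism onto the primitives, hence injective modulo torsion; it follows that $(\Omega\varphi_{[s_{l_1},\,\ldots\,]_c})_\sharp(\hat x_m)$ and $\hat x_{<s_{l_1},\,\ldots\,>}$ agree up to torsion, which is exactly the asserted identity with coefficient $\alpha=c\neq 0$. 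The identical three-step argument, fed by the remaining identities of Corollary \ref{Cor}, produces $\beta$ and $\gamma$. The main obstacle is the middle step: one must be sure the iterated bracket does not collapse in the Pontryagin algebra, i.e. that the passage from basic Samelson products to basic Lie brackets is faithful. This is secured by the freeness of $TH_*(X;\mathbb Q)$ together with the graded Jacobi (quasi-Lie) identities, but the bookkeeping of signs and of which generator $\hat w_n,\hat w_s$ actually appears, governed by the parity of $l_2$ and of $2a(s_1+\cdots+s_l)+l-t$, must be tracked with care.
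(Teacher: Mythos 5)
Your overall strategy---push everything into rational homology via the Hurewicz map, identify the image using Proposition \ref{Prop1} and Corollary \ref{Cor}, and conclude by Cartan--Serre injectivity (Lemma \ref{lem10})---is the same as the paper's. But there is a genuine gap at your second step: you assert that $\hat x_m$ ``was chosen so that $h(\hat x_m)$ is a nonzero rational multiple of the homology generator $u_m$.'' Nothing in the construction of $\hat x_m$ (Definition \ref{def2.2}) guarantees this. The class $h(\hat x_m)$ is primitive because it is spherical, but in the tensor algebra $TH_*(X;\mathbb Q)\cong H_*(\Omega\Sigma X;\mathbb Q)$ the primitives of degree $2am$ are \emph{not} spanned by $u_m$ alone: they contain all iterated Pontryagin brackets of lower-degree generators (e.g.\ $[u_{s_1},u_{s_2}]$ with $s_1+s_2=m$), which are decomposable as tensors yet primitive. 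Accordingly the paper only claims $h(\hat x_m)=\alpha\cdot u_m + D_m(TH_*(X;\mathbb Q))$ with $\alpha\neq 0$, and must then dispose of the decomposable tail. Your chain of equalities $h\bigl((\Omega\varphi_{[s_{l_1},\ldots]_c})_\sharp(\hat x_m)\bigr)=(\Omega\varphi_{[s_{l_1},\ldots]_c})_*\bigl(h(\hat x_m)\bigr)=c\,h\bigl(\hat x_{<s_{l_1},\ldots>}\bigr)$ silently requires $(\Omega\varphi_{[s_{l_1},\ldots]_c})_*$ to annihilate $D_m(TH_*(X;\mathbb Q))$, and that is not automatic.

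This missing annihilation step is exactly where the paper does real work. It first uses Lemma \ref{omega} and the commutative diagram (3.1) to transfer the skeleton-wise triviality of Remark \ref{Res} to the looped map, giving $\Omega \varphi_{[s_{l_1},\ldots]_c} |_{\Omega \Sigma X_{k-1}} \simeq *$; it then invokes the universal property of the tensor algebra---the retraction $E\colon H_*(\Omega\Sigma X;\mathbb Q)\to H_*(X;\mathbb Q)$ of diagram (3.3) splitting $\hat\varphi_{1*}$---to deduce equation (3.4), namely $(\Omega\varphi_{[s_{l_1},\ldots]_c})_*\bigl(D_m(TH_*(X;\mathbb Q))\bigr)=0$. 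Only with (3.4) in hand may one replace $h(\hat x_m)$ by $\alpha\cdot u_m$ and run the computation you propose (which is then precisely the paper's (3.5)). To repair your argument you must either reproduce this step or prove that $\hat x_m$ can be normalized to have Hurewicz image exactly $\alpha\, u_m$; either way an additional argument is required, so as written the proof does not go through. Incidentally, the obstacle you flag at the end---that iterated brackets might ``collapse'' in the Pontryagin algebra---is not the real issue: freeness of $TH_*(X;\mathbb Q)$ settles that, and nonvanishing of the brackets is not even needed for the displayed identities. The issue is the decomposable tail of $h(\hat x_m)$.
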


\begin{proof} To prove the theorem, we need the two lemmas as follows:

\begin{lem} \label{lem10} {\rm (Cartan and Serre)} Let $\mathbb{F}$ be an arbitrary field of characteristic zero. If $Y$ is a simply connected space of finite type, then the Hurewicz homomorphism induces a linear isomorphism
$$
\xymatrix@C=2.5pc{
\bar h : \pi_* (\Omega Y) \otimes \mathbb{F} \ar[r]^-{\cong} &PH_* (\Omega Y; \mathbb{F}),
}
$$
where $PH_* (\Omega Y; \mathbb{F})$ is the primitive subspace of $H_* (\Omega Y; \mathbb{F})$.
\end{lem}

\begin{proof} See \cite[page 231]{FHT} for details.
\end{proof}

\begin{lem}\label{omega} Let $C$ be a co-H-space and let $D$ be a CW-complex. Then the map
$$
\xymatrix@C=2.5pc{
\Omega : [C, D] \ar[r]^-{} & [\Omega C, \Omega D]_H,
}
$$
defined by
$\Omega ([f]) = [\Omega f]$
is a bijection as sets, where $[\Omega C, \Omega D]_H$ is the set of homotopy classes of Hopf maps.
\end{lem}

\begin{proof}
See \cite[page 75]{S}) for more details.
\end{proof}

Let
$r : \Omega \Sigma X \rightarrow \Omega \Sigma X_{\mathbb{Q}}$
be the rationalization map. Then we have the following commutative diagram:
$$
\xymatrix@C=15mm @R=10mm{
\ar@/^1.5pc/[rr]^-{h} \pi_* (\Omega \Sigma X ) \ar[r]^-{r_* } \ar[d]_-{\Omega F_\sharp}  &\pi_* (\Omega \Sigma X_{\mathbb{Q}} )  \ar[d]^-{\Omega F_{\mathbb{Q}_\sharp}} \ar[r]^-{\bar h}_-{\cong} &PH_* (\Omega \Sigma X ; \mathbb Q ) \ar[d]^-{\Omega F_*} \\
\ar@/_1.5pc/[rr]_-{h} \pi_* (\Omega \Sigma X ) \ar[r]^-{r_*}  &\pi_* (\Omega \Sigma X_{\mathbb{Q}} )  \ar[r]^-{\bar h}_-{\cong} &PH_* (\Omega \Sigma X ; \mathbb Q ),
}
$$
where $h = \bar h \circ r_*$, $F_{\mathbb{Q}} : \Sigma X_{\mathbb{Q}} \rightarrow \Sigma X_{\mathbb{Q}}$, and
$$
F =
\begin{cases}
         \varphi_{[s_{l_1}, [s_{l_1 -1},\ldots,[s_{1}, s_2 ]_c \ldots]_c]_c}  &\text{or} \\
         \psi_{[s_{l_2}, [s_{l_2 -1},\ldots,[s_{1}, {s_2}]_c \ldots]_c]_c}    &\text{or} \\
         [ f_{s_l}, [ f_{s_{l-1}},\ldots,[ f_{s_{1}},  f_{s_2}]_c \ldots]_c]_c .
\end{cases}
$$
Since $\hat x_m, \hat w_n$ and $\hat w_s$ are rationally nonzero indecomposable homotopy classes, from the commutative diagram above, we have the following:
\begin{itemize}
\item [{\rm (a)}] $h(\hat x_m ) = \alpha \cdot u_m + D_m (TH_* (X ; \mathbb Q ))$;
\item [{\rm (b)}] $h(\hat w_n ) = \beta_1 \cdot v_n + D_n (TH_* (X ; \mathbb Q ))$, where ${\rm dim}(\hat w_n)$ is odd;
\item [{\rm (c)}] $h(\hat w_n ) = \beta_2 \cdot u_n + D_n (TH_* (X ; \mathbb Q ))$, where ${\rm dim}(\hat w_n)$ is even;
\item [{\rm (d)}] $h(\hat w_s ) = \gamma_1 \cdot v_s + D_s (TH_* (X ; \mathbb Q ))$, where ${\rm dim}(\hat w_s)$ is odd; and
\item [{\rm (e)}] $h(\hat w_s ) = \gamma_2 \cdot u_s + D_s (TH_* (X ; \mathbb Q ))$,  where ${\rm dim}(\hat w_s)$ is even.
\end{itemize}
Here,
\begin{enumerate}
\item all the coefficients $\alpha$, $\beta_1$, $\beta_2$, $\gamma_1$ and $\gamma_2$ are nonzero;
\item $u_i$ and $v_j$ are rational homology generators of the rational homology of $\Omega \Sigma X$ as a tensor algebra
$$
TH_* (X ; \mathbb Q ) \cong T [u_1, u_2, u_3, \ldots, u_i, \ldots ; v_1, v_2, v_3, \ldots, v_j , \ldots]
$$
in which the homology dimensions of $u_i$ and $v_j$ are $2ai$ and $2aj+1$, respectively; and
\item $D_i (TH_* (X ; \mathbb Q ))$ indicates the decomposable elements consisting of the sum of tensor products in the tensor algebra.
\end{enumerate}
Since $\Sigma$ and $\Omega$ are adjoint functors, by Lemma \ref{omega}, we have the following commutative diagram:
\begin{equation} \label{1}
\xymatrix@C=13mm @R=5mm{
[X_{k-1} , \Omega \Sigma X] \ar[r]^-{\cong} \ar[d]^-{i_\sharp}  &[\Sigma (X_{k-1}) , \Sigma X] \ar[d]^-{\Sigma i_\sharp} \ar[r]^-{\cong} &[\Omega \Sigma X_{k-1} , \Omega \Sigma X]_H \ar[d]^-{\Omega \Sigma i_\sharp}   \\
[X , \Omega \Sigma X] \ar[r]^-{\cong}  &[\Sigma X , \Sigma X] \ar[r]^-{\cong} &[\Omega \Sigma X , \Omega \Sigma X]_H ,
} \tag{3.1}
\end{equation}
where $i : X_{k-1} \hookrightarrow X$ is the inclusion.
By using the results above, we first obtain
\begin{equation} \label{2}
\begin{array}{lll}
h \circ ({\Omega \varphi_{[s_{l_1}, [s_{l_1 -1},\ldots,[s_{1}, s_2 ]_c \ldots]_c]_c}}_\sharp) (\hat x_m)
 &= ({\Omega \varphi_{[s_{l_1}, [s_{l_1 -1},\ldots,[s_{1}, s_2 ]_c \ldots]_c]_c}}_*)\circ h (\hat x_m) \\
 &= ({\Omega \varphi_{[s_{l_1}, [s_{l_1 -1},\ldots,[s_{1}, s_2 ]_c \ldots]_c]_c}}_*)
  (\alpha \cdot u_m + D_{m} (TH_* (X ; \mathbb Q )) ).
\end{array}
\tag{3.2}
\end{equation}
Since the restriction of the iterated commutator
$$
\varphi_{[s_{l_1}, [s_{l_1 -1},\ldots,[s_{1}, s_2 ]_c \ldots]_c]_c} |_{(\Sigma X)_{k}} : (\Sigma X)_k \rightarrow \Sigma X
$$
to the $k$-skeleton is inessential, where $k = 2a(s_1 + s_2 +\cdots + s_{l_1})$ (see Remark \ref{Res} with $k = k_1$), by using the commutative diagram \eqref{1} and adjointness, we see that the restriction of the iterated commutators on the loop structure of $\Omega \Sigma X$
$$
\hat \varphi_{[s_{l_1}, [s_{l_1 -1},\ldots,[s_{1}, s_2 ]_c \ldots]_c]_c} |_{X_{k-1}} : X_{k-1} \rightarrow \Omega \Sigma X
$$
is inessential, and thus the restriction
$$
\Omega \varphi_{[s_{l_1}, [s_{l_1 -1},\ldots,[s_{1}, s_2 ]_c \ldots]_c]_c} |_{\Omega \Sigma X_{k-1}} : \Omega \Sigma X_{k-1} \rightarrow \Omega \Sigma X
$$
of the Hopf map
$\Omega \varphi_{[s_{l_1}, [s_{l_1 -1},\ldots,[s_{1}, s_2 ]_c \ldots]_c]_c} : \Omega \Sigma X \rightarrow \Omega \Sigma X$
is also homotopically trivial.

We note that the tensor algebra $TH_* (X ; \mathbb Q )$ has the universal property. In particular, the identity map $1_{H_* (X ; \mathbb Q )}$ on $H_* (X ; \mathbb Q )$ extends uniquely to a homomorphism
$E : H_* (\Omega \Sigma X ; \mathbb Q ) \rightarrow H_* (X ; \mathbb Q )$; that is, the following diagram
\begin{equation} \label{3}
\xymatrix@C=15mm @R=15mm{
H_* (X ; \mathbb Q ) \ar[r]^-{1_{H_*(X ; \mathbb Q )}} \ar[d]_-{\hat\varphi_1}  &H_* (X ; \mathbb Q ) \\
H_* (\Omega \Sigma X ; \mathbb Q ) \ar[ur]_-{E}
} \tag{3.3}
\end{equation}
is strictly commutative (indeed, we also note that $\hat \varphi_1 : X \rightarrow \Omega \Sigma X$ has a left homotopy inverse).
By \eqref{3} and Remark \ref{Res}, we have
\begin{equation} \label{4}
({\Omega \varphi_{[s_{l_1}, [s_{l_1 -1},\ldots,[s_{1}, s_2 ]_c \ldots]_c]_c}}_*)
  (D_{m} (TH_* (X ; \mathbb Q )) ) = 0.
\tag{3.4}
\end{equation}
By \eqref{2}, \eqref{4}, Proposition \ref{Prop1} and Corollary \ref{Cor}, we now have the following:
\begin{equation} \label{5}
\begin{array}{lll}
h \circ ({\Omega \varphi_{[s_{l_1}, [s_{l_1 -1},\ldots,[s_{1}, s_2 ]_c \ldots]_c]_c}}_\sharp) (\hat x_m)
     &= ({\Omega \varphi_{[s_{l_1}, [s_{l_1 -1},\ldots,[s_{1}, s_2 ]_c \ldots]_c]_c}}_*)
         ( \alpha \cdot u_m ) \\
     &= ({\Omega \varphi_{[s_{l_1}, [s_{l_1 -1},\ldots,[s_{1}, s_2 ]_c \ldots]_c]_c}}_*)
        \hat \varphi_{1_*} ( \alpha \cdot u_m ) \\
     &= \hat \varphi_{{[s_{l_1}, [s_{l_1 -1},\ldots,[s_{1}, s_2 ]_c \ldots]_c]_c}_*}
     (\alpha \cdot u_m ) \\
     &= h (\alpha \cdot \hat x_{<s_{l_1}, <s_{l_1 -1},\ldots,<s_{1}, s_2 > \ldots>>} ) .
\end{array}
\tag{3.5}
\end{equation}
Since the homomorphic images of the iterated Samelson products
$$
\hat x_{<s_{l_1}, <s_{l_1 -1},\ldots,<s_{1}, s_2 > \ldots>>} : S^{2am} \rightarrow \Omega \Sigma X,
$$
where $m = s_1 + s_2 + \cdots + s_{l_1}$, are spherical, thus primitive in rational homology, the Cartan-Serre theorem shows that
$$
{\Omega \varphi_{[s_{l_1}, [s_{l_1 -1},\ldots,[s_{1}, s_2 ]_c \ldots]_c]_c}}_\sharp) (\hat x_m)
=  \alpha \cdot \hat x_{<s_{l_1}, <s_{l_1 -1},\ldots,<s_{1}, s_2 > \ldots>>}
$$
as required.

A little elaboration of the argument gives the proofs for the other two cases.
\end{proof}

The Hopf-Thom theorem asserts that the loop space and the suspension of a CW-space are rationally homotopy equivalent to the products of Eilenberg-MacLane spaces and the wedge products of rational spheres, respectively.
In the homotopy category of $G$-spectra, any $(G,c)$-spectrum splits as a product of Eilenberg-MacLane spectra \cite{JG}.
In our case, $\Sigma X$ has the rational homotopy
type of the wedge product of infinite number of spheres; that is,
$$
\Sigma X \simeq_{\mathbb Q} S^{2a+1} \vee S^{2a+2} \vee S^{4a+1} \vee S^{4a+2} \vee \cdots \vee  S^{2an+1} \vee  S^{2an+2}\vee \cdots
$$
for all $n \geq1$.
The following table below (TABLE 3) is a list of various kinds of rationally nonzero indecomposable generators, and pure and hybrid decomposable generators as the basic Whitehead products on the graded rational homotopy group $\pi_{*} (\Sigma X) \otimes \mathbb Q$. We point out that the rational generators of $\pi_{4n+3} (S^{2n+2}_{\mathbb Q})$ were not described on the table for convenience.

\renewcommand{\tabcolsep}{13.48pt}
\renewcommand{\arraystretch}{1.0}
\begin{table}[h!]
\begin{center}
 \caption{Generators on $\pi_{*} (\Sigma X) \otimes \mathbb Q$ when $a=1$}
    \label{tab:table3}
    \begin{tabular}{ |>{\columncolor{lightgray}} p{1.3cm} | p{2.2cm} | p{3.3cm}  | p{3.8cm} |}
    \specialrule{1pt}{0.01pt}{0.01pt}
     Dimension & Indecomposables & Pure Decomposables & Hybrid Decomposables \\ \specialrule{1pt}{0.01pt}{0.01pt}
      $3$ & $\chi_1$ & - & -  \\    \specialrule{0.1pt}{0.01pt}{0.01pt}
      $4$ & $\eta_1$ & - & -  \\    \specialrule{0.1pt}{0.01pt}{0.01pt}
      $5$ & $\chi_2$ & - & -  \\    \specialrule{0.1pt}{0.01pt}{0.01pt}
      $6$ & $\eta_2$ & - &$[\chi_1, \eta_1]$  \\    \specialrule{0.1pt}{0.01pt}{0.01pt}
      $7$ & $\chi_3$ & $[\chi_1, \chi_2]$ & -  \\    \specialrule{0.1pt}{0.01pt}{0.01pt}
      $8$ & $\eta_3$ & - & $[\chi_1, \eta_2]$, $[\eta_1,\chi_2]$, \newline $[\chi_1, [\chi_1,\eta_1]]$ \\    \specialrule{0.1pt}{0.01pt}{0.01pt}
      $9$ & $\chi_4$ & $[\chi_1, \chi_3]$, $[\chi_1, [\chi_1,\chi_2]]$, $[\eta_1, \eta_2]$ & $[\eta_1, [\chi_1,\eta_1]]$ \\
                                                                                                             \specialrule{0.1pt}{0.01pt}{0.01pt}
      $10$ & $\eta_4$ & - & $[\chi_1, \eta_3]$, $[\chi_2, \eta_2]$, \newline $[\eta_1, \chi_3]$, $[\chi_2, [\chi_1, \eta_1]]$, \newline $[\eta_1, [\chi_1, \chi_2]]$, $[\chi_1, [\chi_1, \eta_2]]$, $[\chi_1, [\chi_1, [\chi_1,\eta_1]]]$   \\    \specialrule{0.1pt}{0.01pt}{0.01pt}
      $11$ & $\chi_5$ & $[\chi_1,\chi_4]$, $[\eta_1, \eta_3]$, \newline  $[\chi_2, \chi_3]$, $[\chi_2, [\chi_1, \chi_2]]$, \newline  $[\chi_1,[\chi_1, \chi_3]]$, $[\chi_1,[\chi_1, [\chi_1,\chi_2]]]$ & $[\eta_1, [\chi_1, \eta_2]]$, \newline $[\eta_1, [\eta_1,\chi_2]]$, $[\eta_1, [\chi_1, [\chi_1,\eta_1]]]$  \\    \specialrule{0.1pt}{0.01pt}{0.01pt}
      $12$ & $\eta_5$ & $[\eta_1,[\eta_1, \eta_2]]$ &
      $[\chi_1, \eta_4]$, $[\chi_1, [\chi_1, \eta_3]]]$, \newline
      $[\chi_1, [\chi_1, [\chi_1, \eta_2]]]$,
      $[\chi_1, [\chi_1, [\chi_1, [\chi_1,\eta_1]]]]$,
      $[\eta_1, \chi_4]$, $[\eta_1, [\chi_1, \chi_3]]$, \newline
      $[\eta_1, [\chi_1, [\chi_1,\chi_2]]]$,
      $[\eta_1, [\eta_1, [\chi_1,\eta_1]]]$,
      $[\chi_2, \eta_3]$, \newline $[\chi_2, [\chi_1, \eta_2]]$,
      $[\chi_2,[\eta_1,\chi_2]]$, $[\chi_2, [\chi_1, [\chi_1,\eta_1]]]$, $[\eta_2, \chi_3]$, $[\eta_2, [\chi_1, \chi_2]]$, \newline $[[\chi_1, \eta_1], [\chi_1, \chi_2]]$ \\ \specialrule{0.1pt}{0.01pt}{0.01pt}
      $13$ & $\chi_6$ & $[\chi_1, \chi_5]$, $[\chi_1, [\chi_1,\chi_4]]$, \newline
      $[\chi_1, [\chi_1, [\chi_1, [\chi_1,\chi_2]]]]$, $[\chi_1, [\chi_1,[\chi_1, \chi_3]]]$, $[\eta_1,\eta_4]$, $[\chi_2,\chi_4]$, \newline $[\chi_2,[\chi_1, \chi_3]]$, $[\chi_2,[\chi_1, [\chi_1,\chi_2]]]$, $[\eta_2,\eta_3]$
      &$[\eta_1,[\chi_1, \eta_3]]$, \newline
      $[\eta_1,[\eta_1, \chi_3]]$, $[\eta_1,[\chi_1, [\chi_1, \eta_2]]]$,  $[\eta_1,[\chi_1, [\chi_1, [\chi_1,\eta_1]]]]$, $[\eta_1,[\eta_1, [\chi_1, \chi_2]]]$, $[\chi_2,[\eta_1, \eta_2]]$, $[\chi_2,[\eta_1, [\chi_1,\eta_1]]]$, $[\eta_2,[\chi_1, \eta_2]]$, $[\eta_2,[\eta_1,\chi_2]]$, $[\eta_2,[\chi_1, [\chi_1,\eta_1]]]$,
      $[[\chi_1, \eta_1],[\chi_1, \eta_2]]$, $[[\chi_1, \eta_1],[\eta_1,\chi_2]]$, $[[\chi_1, \eta_1],[\chi_1, [\chi_1,\eta_1]]]$\\ \specialrule{0.1pt}{0.01pt}{0.01pt}
      $\vdots$ & $\vdots$ & $\vdots$ & $\vdots$  \\ \specialrule{0.1pt}{0.01pt}{0.01pt}
      $2n+1$ & $\chi_n$ & $[\chi_1, \chi_{n-1}]$, $\ldots$ & $\ldots$  \\ \specialrule{0.1pt}{0.01pt}{0.01pt}
      $2n+2$ & $\eta_n$ & $\ldots$ & $[\chi_1, \eta_{n-1}]$, $\ldots$  \\ \specialrule{0.1pt}{0.01pt}{0.01pt}
      $\vdots$ & $\vdots$ & $\vdots$ & $\vdots$  \\
    \specialrule{1pt}{0.01pt}{0.01pt}
    \end{tabular}
\end{center}
\end{table}
\vspace{0.4cm}

\begin{rmk}
We observe that there are many different types of basic Whitehead products whenever
the homotopy dimensions are on the increase. Take the basic Whitehead products $[[\chi_1, \eta_1], [\chi_1, \chi_2]]$ in dimension $12$, and $[[\chi_1, \eta_1],[\chi_1, \eta_2]]$, $[[\chi_1, \eta_1],[\eta_1,\chi_2]]$, $[[\chi_1, \eta_1],[\chi_1, [\chi_1,\eta_1]]]$ in dimension $13$ as examples.
By ignoring (or killing) all torsion subgroups of the homotopy groups and considering the homotopy groups rationally, we have
$$
{\rm rank}_{\mathbb{Z}} \pi_{*} (\Sigma X)/{\rm torsion} = {\rm dim}_{\mathbb{Q}} \pi_{*} (\Sigma X) \otimes \mathbb Q;
$$
that is, the number of generators on the homotopy groups modulo torsion subgroups is exactly the same as the dimension of the rational vector spaces.
Therefore, one can construct the corresponding indecomposable generators, and pure and hybrid decomposable elements
on the graded homotopy groups modulo torsion subgroups $\pi_{*}(\Sigma X)/{\rm torsion}$.
\end{rmk}

\medskip

\section{The main theorem}

By using all the ingredients in Sections \ref{section2} and \ref{section3}, we now describe the main result of this paper and prove it as follows:

\begin{thm}\label{thm}
Let $X: = K( {\mathbb Z}, 2a) \vee \bigvee_{j=1}^{\infty} K( {\mathbb Z}, 2aj+1)$ be the wedge product of the
Eilenberg-MacLane spaces, where `$a$' is a positive integer. Then $SNT(\Sigma X) = \{ [\Sigma X] \}$.
\end{thm}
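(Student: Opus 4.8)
The plan is to deduce the theorem from the McGibbon--M{\o}ller criterion (Theorem \ref{MM}), reducing it to a finite-cokernel statement about the realization map on automorphism groups. First I would check the three standing hypotheses of Theorem \ref{MM} for $Y = \Sigma X$: it is simply connected because its cells begin in dimension $2a+1 \geq 3$; it is of finite type because each Eilenberg--MacLane factor is; and, by the Hopf--Thom splitting recorded above, it has the rational homotopy type of the bouquet $S^{2a+1}\vee S^{2a+2}\vee S^{4a+1}\vee S^{4a+2}\vee\cdots$ of spheres. Granting these, it suffices to verify condition (c), namely that the realization map
$$
{\rm Aut}(\Sigma X)\longrightarrow {\rm Aut}(\pi_{\leq n}(\Sigma X)),\qquad f\mapsto f_\sharp,
$$
has finite cokernel for every $n$.

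Next I would analyze the target group. Rationally, ${\rm Aut}(\pi_{\leq n}(\Sigma X))$ is the automorphism group of the truncated Whitehead algebra $\mathcal{L}_{\leq 2an+1}$, a free graded quasi-Lie algebra on the indecomposable generators $\chi_i$ (degree $2ai+1$) and $\eta_j$ (degree $2aj+2$). The decisive observation is that these generators lie in pairwise distinct degrees: the $\chi_i$ occupy degrees $\equiv 1 \pmod{2a}$ and the $\eta_j$ degrees $\equiv 2 \pmod{2a}$, while within each family the degrees strictly increase. Hence every automorphism acts diagonally on indecomposables, so ${\rm Aut}(\mathcal{L}_{\leq 2an+1})$ splits as a semidirect product $T\ltimes U$ of a torus $T$ (independent rescalings of the generators) with the unipotent radical $U$ (addition of decomposables to generators). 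Passing to the integral automorphism group modulo torsion, the torus contributes only the finite sign group, so the integral unipotent radical already has finite index in ${\rm Aut}(\pi_{\leq n}(\Sigma X))$; it therefore suffices to show that the image of the realization map contains a finite-index subgroup of that unipotent radical.

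The heart of the argument is the realization of the unipotent part, which is precisely what Sections \ref{section2} and \ref{section3} were built for. For each basic Whitehead product occurring as a decomposable in a given degree, the self-equivalence $1+[f_{s_l},[f_{s_{l-1}},\ldots,[f_{s_1}, f_{s_2}]_c\ldots]_c]_c$ of Definition \ref{self-map} is, by Theorem \ref{thm4} together with Proposition \ref{Re} and the Cartan--Serre identification, carried by the realization map to the elementary transvection that adds a \emph{nonzero} multiple of that Whitehead product to the corresponding generator while fixing the others. Since the basic Whitehead products form a basis of the decomposable part of $\mathcal{L}_{\leq 2an+1}$ in each degree, the one-parameter subgroups generated by these transvections exhaust $U$ rationally; combined with the finite sign group, the image thus has finite index, so the cokernel is finite for every $n$, and Theorem \ref{MM} yields $SNT(\Sigma X)=\{[\Sigma X]\}$.

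I expect the principal obstacle to be the final integral bookkeeping in the previous paragraph: passing from the rational surjectivity of the realized transvections onto $U$ to the finite-index statement over $\mathbb{Z}$. Because Theorem \ref{thm4} guarantees only nonzero coefficients $\alpha,\beta,\gamma$ rather than units, one cannot hit the integral unipotent radical on the nose; instead one must argue that the subgroup generated by the realized transvections $\exp(\alpha\,\delta)$, their iterated-commutator relations, and the sign changes is of finite index in the full integral unipotent radical. Verifying this carefully---keeping track of the basic Whitehead product enumeration illustrated in Table 3 and the skeleton-triviality estimates of Remark \ref{Res}---is where the real work lies, whereas the appeal to Theorem \ref{MM} and the degree-separation of the generators are comparatively formal.
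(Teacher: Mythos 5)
Your proposal is correct and follows essentially the same route as the paper: both reduce the theorem to condition (c) of Theorem \ref{MM} and then realize the decomposable-shifting automorphisms of $\pi_{\leq n}(\Sigma X)/{\rm torsion}$ by the homotopy self-equivalences $1+[f_{s_l},[\ldots]_c]_c$ of Definition \ref{self-map}, via Proposition \ref{Re} and Theorem \ref{thm4}, with the indecomposable part contributing only finite sign groups since $I_{r_m}(qL)\cong\mathbb{Z}$. Your global $T\ltimes U$ decomposition is merely a repackaging of the paper's degree-by-degree exact sequence (4.1) and induction on $m$, and the integral bookkeeping you flag as the main obstacle (coefficients $\alpha,\beta,\gamma$ nonzero but not units) is handled by the same observation in either formulation: each ${\rm Hom}(I_{r_m}(qL),D_{r_m}(qL))$ is free abelian of finite rank, so the realized transvections generate a finite-index subgroup of it.
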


Let $r : {\mathbb Z_+} \rightarrow {\mathbb Z_+} (m \mapsto r_m)$ be a sequence of positive integers defined by
$$
r_m =
\begin{cases}
         2an+1 &\text{if}~~ m = 2n-1 \\
         2an+2 &\text{if}~~ m = 2n
\end{cases}
$$
for all $n \geq 1$.

\begin{proof}
It can be shown that the graded homotopy groups modulo torsion subgroups
$$
qL = \pi_{*}(\Sigma X)/{\rm torsion}
$$
and
$$
qL_{\leq r_m} = \pi_{\leq r_m}(\Sigma X)/{\rm torsion}
$$
have the structures of the quasi-Lie algebras whose brackets are the Whitehead products corresponding to $\mathcal L$ and
$\mathcal L_{\leq r_m}$ as the rational homotopy counterparts, respectively.

Let $I_{r_m} (qL)$ and $D_{r_m} (qL)$ be the indecomposable and (pure and hybrid) decomposable components, respectively, of the graded homotopy group modulo
torsion subgroups $\pi_{r_m}(\Sigma X)/{\rm torsion}$. Then, by using the adjointness and the dual of the Samelson products in Theorem \ref{thm4}, we can consider the following maps
$$
{\varphi_{[s_{l_1}, [s_{l_1 -1},\ldots,[s_{1}, s_2 ]_c \ldots]_c]_c}}_\sharp : I_{r_{m_1}} (qL) \rightarrow
D_{r_{m_1}} (qL),
$$
$$
{\psi_{[s_{l_2}, [s_{l_2 -1},\ldots,[s_{1}, {s_2}]_c \ldots]_c]_c}}_\sharp : I_{r_{m_2}} (qL) \rightarrow
D_{r_{m_2}} (qL)
$$
and
$$
{[ f_{s_l}, [ f_{s_{l-1}},\ldots,[ f_{s_{1}},  f_{s_2}]_c \ldots]_c]_c}_\sharp : I_{r_{m_3}} (qL) \rightarrow
D_{r_{m_3}} (qL),
$$
where
\begin{itemize}
\item [{\rm (1)}] $r_{m_1} = 2a(s_1 + s_2 +\cdots + s_{l_1})+1$;
\item [{\rm (2)}] $r_{m_2} = 2a(s_1 + s_2 +\cdots + s_{l_2}) + l_2+1$; and
\item [{\rm (3)}] $r_{m_3} = 2a(s_1 + s_2 +\cdots + s_l) + l-t+1$.
\end{itemize}

For each iterated Samelson product or for each iterated Whitehead product as the dual object, by Theorem \ref{thm4} again, we can guarantee the existence of the iterated commutator which corresponds to the given iterated Samelson product satisfying that the type of the iterated Samelson product is exactly the same as the type of the loop of the iterated commutator. By delooping the iterated commutators and considering the adjointness $(\Sigma, \Omega)$, we have the following correspondence or relationship (see Table 4) between elements of graded homotopy groups modulo torsion subgroups $\pi_{*}(\Sigma X)/{\rm torsion}$ and (pure and hybrid) iterated commutators of self-maps on $\Sigma X$.
\renewcommand{\tabcolsep}{13.48pt}
\renewcommand{\arraystretch}{1.0}
\begin{table}[h!]
\begin{center}
 \caption{The correspondence of elements of homotopy groups and iterated commutators}
    \label{tab:table3}
    \begin{tabular}{ |p{5.3cm} |  p{5.3cm} |}
    \specialrule{1pt}{0.01pt}{0.01pt}
      \cellcolor{lightgray} Elements of $\pi_{*}(\Sigma X)/{\rm torsion}$ & \cellcolor{lightgray}Iterated commutators \\ \specialrule{1pt}{0.01pt}{0.01pt}
      $x_{i_1}$ & $\varphi_{i_1}$  \\    \specialrule{0.1pt}{0.01pt}{0.01pt}
      $y_{j_1}$ & $\psi_{j_1}$  \\    \specialrule{0.1pt}{0.01pt}{0.01pt}
      $[x_{i_1}, x_{i_2}]$  & $[\varphi_{i_1}, \varphi_{i_2}]$  \\    \specialrule{0.1pt}{0.01pt}{0.01pt}
      $[y_{j_1}, y_{j_2}]$  &$[\psi_{j_1}, \psi_{j_2}]$  \\    \specialrule{0.1pt}{0.01pt}{0.01pt}
      $\vdots$  &$\vdots$  \\    \specialrule{0.1pt}{0.01pt}{0.01pt}
      $[x_{i_s}, y_{j_t}]$ & $[\varphi_{i_s}, \psi_{j_t}]$   \\    \specialrule{0.1pt}{0.01pt}{0.01pt}
      $[x_{i_s},[x_{i_s}, y_{j_t}]]$ & $[\varphi_{i_s},[\varphi_{i_s}, \psi_{j_t}]]$ \\    \specialrule{0.1pt}{0.01pt}{0.01pt}
      $\vdots$ & $\vdots$  \\ \specialrule{0.1pt}{0.01pt}{0.01pt}
      $[[x_{i_s}, x_{i_t}], [y_{j_s}, y_{j_t}]]$ & $[[\varphi_{i_s}, \varphi_{i_t}], [\psi_{j_s}, \psi_{j_t}]]$ \\ \specialrule{0.1pt}{0.01pt}{0.01pt}
      $\vdots$ & $\vdots$  \\ \specialrule{0.1pt}{0.01pt}{0.01pt}
      $[[x_{i_s}, x_{i_t}], [y_{j_u}, [y_{j_s}, y_{j_t}]]]$ & $[[\varphi_{i_s}, \varphi_{i_t}], [\psi_{j_u},[\psi_{j_s}, \psi_{j_t}]]]$ \\ \specialrule{0.1pt}{0.01pt}{0.01pt}
      $\vdots$ & $\vdots$  \\ \specialrule{0.1pt}{0.01pt}{0.01pt}
      $[x_{s_{l_1}}, [x_{s_{l_1 -1}},\ldots,[x_{s_{1}}, x_{s_2}] \ldots]]$  & $[\varphi_{s_{l_1}}, [ \varphi_{s_{l_1 -1}},\ldots,[ \varphi_{s_{1}},  \varphi_{s_2}]_c \ldots]_c]_c$ \\ \specialrule{0.1pt}{0.01pt}{0.01pt}
      $[y_{s_{l_2}}, [y_{s_{l_2 -1}},\ldots,[y_{s_{1}}, y_{s_2}] \ldots]]$  & $[\psi_{s_{l_2}}, [ \psi_{s_{l_2 -1}},\ldots,[ \psi_{s_{1}},  \psi_{s_2}]_c \ldots]_c]_c$ \\ \specialrule{0.1pt}{0.01pt}{0.01pt}
      $[z_{s_l}, [z_{s_{l-1}},\ldots,[z_{s_{1}}, z_{s_2}] \ldots]]$  & $[f_{s_l}, [ f_{s_{l-1}},\ldots,[ f_{s_{1}},  f_{s_2}]_c \ldots]_c]_c$ \\  \specialrule{0.1pt}{0.01pt}{0.01pt}
      $\vdots$ & $\vdots$   \\ \specialrule{0.1pt}{0.01pt}{0.01pt}                                                                                                             \specialrule{1pt}{0.01pt}{0.01pt}
    \end{tabular}
\end{center}
\end{table}
\vspace{0.4cm}

Let
$$
\pi : qL_{\leq r_m} \rightarrow I_{r_m} (qL)
$$
and
$$
i : D_{r_m} (qL)) \hookrightarrow qL_{\leq r_m}
$$
be the projection and the inclusion, respectively. Then by considering the indecomposable and (pure and hybrid) decomposable generators (see Table 3 when $a = 1$), we can see that the following sequence
\begin{equation} \label{4.1}
\xymatrix@C=4.5mm @R=0.5mm{
0 \ar[r]^-{} &{\rm Hom}(I_{r_m} (qL), D_{r_m} (qL)) \ar[r]^-{P} &{\rm Aut}(qL_{\leq r_m})
 \ar[r]^-{R} &{\rm Aut} (qL_{\leq r_{m-1}}) \oplus {\rm Aut} (I_{r_m} (qL)) \ar[r]^-{} &0
}
\tag{4.1}
\end{equation}
is exact for each $m \geq 1$ (see \cite[page 289]{MM}).
Here,
\begin{enumerate}
\item[{\rm (1)}] $P(-) = 1 + i\circ (-)\circ \pi$; that is,
$$
P({\varphi_{[s_{l_1}, [s_{l_1 -1},\ldots,[s_{1}, s_2 ]_c \ldots]_c]_c}}_\sharp )
= 1 + i\circ ({\varphi_{[s_{l_1}, [s_{l_1 -1},\ldots,[s_{1}, s_2 ]_c \ldots]_c]_c}}_\sharp)\circ \pi ,
$$
$$
P({\psi_{[s_{l_2}, [s_{l_2 -1},\ldots,[s_{1}, {s_2}]_c \ldots]_c]_c}}_\sharp )
= 1 + i\circ ({\psi_{[s_{l_2}, [s_{l_2 -1},\ldots,[s_{1}, {s_2}]_c \ldots]_c]_c}}_\sharp)\circ \pi
$$
and
$$
P({[ f_{s_l}, [ f_{s_{l-1}},\ldots,[ f_{s_{1}},  f_{s_2}]_c \ldots]_c]_c}_\sharp ) = 1 + i\circ ({[ f_{s_l}, [ f_{s_{l-1}},\ldots,[ f_{s_{1}},  f_{s_2}]_c \ldots]_c]_c}_\sharp)\circ \pi ;
$$ and
\item[{\rm (2)}] $R$ is given by the restriction and the projection of
the group of automorphisms of $qL_{\leq r_m}$ to $qL_{\leq r_{m-1}}$ and into $I_{r_m} (qL)$, respectively.
\end{enumerate}
We remark that there is only one indecomposable generator (if any) in each dimension in the quasi-Lie algebra
$$
qL = \pi_{*}(\Sigma X)/{\rm torsion}.
$$
Thus, we have
$$
I_{r_m} (qL) \cong \mathbb Z
$$
for each $m \geq 1$. Moreover, by considering the indecomposable elements, and pure and hybrid decomposable elements of
$\pi_* (\Sigma X)/{\rm torsion}$, we obtain
\begin{enumerate}
\item[{\rm (1)}] ${\rm Aut} (\pi_{2a+1} (\Sigma X)/{\rm torsion}) \cong {\mathbb Z}_2$;
\item[{\rm (2)}] ${\rm Aut} (\pi_{2a+2} (\Sigma X)/{\rm torsion}) \cong {\mathbb Z}_2$;
\item[{\rm (3)}] ${\rm Aut} (\pi_{4a+1} (\Sigma X)/{\rm torsion}) \cong {\mathbb Z}_2$;
\item[{\rm (4)}] ${\rm Aut} (\pi_{\leq 4a+1} (\Sigma Y)/{\rm torsion}) \cong {\mathbb Z}_2 \oplus {\mathbb Z}_2 \oplus {\mathbb Z}_2$; and
\item[{\rm (5)}] ${\rm Aut} (\pi_{\leq r_m} (\Sigma X)/{\rm torsion})$ is both nonabelian and infinite for all $m \geq 4$.
\end{enumerate}
In order to show that the set of all same $n$-types of $\Sigma X$ is the set consisting of only one element as a
single homotopy type of itself in the homotopical same $n$-type point of view, we argue by induction on $m$.
We first consider the map
$$
{\rm Aut}(\Sigma X) \rightarrow {\rm Aut}(qL_{r_1}) = {\rm Aut}(qL_{2a\cdot 1+1}) \cong {\mathbb Z}_2
$$
so that the induction step begins. We secondly suppose that the map
$$
{\rm Aut}(\Sigma X) \rightarrow {\rm Aut}(qL_{\leq r_{m-1}})
$$
has a finite cokernel, and then show that
$$
{\rm Aut}(\Sigma X) \rightarrow {\rm Aut}(qL_{\leq r_{m}})
$$
has a finite index for each $m$. For any kinds of iterated Whitehead products
$$
x_{[s_{l_1}, [s_{l_1 -1},\ldots,[s_{1}, s_2] \ldots]]} \in D_{2a(s_1 + s_2 +\cdots + s_{l_1})+1}(qL) ,
$$
$$
y_{[s_{l_2}, [s_{l_2 -1},\ldots,[s_1, s_2] \ldots]]} \in D_{2a(s_1 + s_2 +\cdots + s_{l_2}) +l_2 +1}(qL)
$$
and
$$
[z_{s_l}, [z_{s_{l-1}},\ldots,[z_{s_{1}}, z_{s_2}] \ldots]] \in D_{2a(s_1 + s_2 +\cdots + s_l) + l-t+1}(qL)
$$
there are (pure and hybrid) iterated commutators
$$
\varphi_{[s_{l_1}, [s_{l_1 -1},\ldots,[s_{1}, s_2 ]_c \ldots]_c]_c} : \Sigma X \rightarrow \Sigma X,
$$
$$
\psi_{[s_{l_2}, [s_{l_2 -1},\ldots,[s_{1}, {s_2}]_c \ldots]_c]_c} : \Sigma X \rightarrow \Sigma X
$$
and
$$
[f_{s_l}, [ f_{s_{l-1}},\ldots,[ f_{s_{1}},  f_{s_2}]_c \ldots]_c]_c : \Sigma X \rightarrow \Sigma X
$$
corresponding to the the types of iterated Whitehead products, respectively. We note that the types of iterated Whitehead products are exactly the same as the types of (pure and hybrid) iterated commutators.
By considering the Whitehead theorem and by using the results in Sections 2 and 3, we now have the homotopy self-equivalences
$$
1+\varphi_{[s_{l_1}, [s_{l_1 -1},\ldots,[s_{1}, s_2 ]_c \ldots]_c]_c} : \Sigma X \rightarrow \Sigma X,
$$
$$
1+\psi_{[s_{l_2}, [s_{l_2 -1},\ldots,[s_{1}, {s_2}]_c \ldots]_c]_c} : \Sigma X \rightarrow \Sigma X
$$
and
$$
1+[f_{s_l}, [ f_{s_{l-1}},\ldots,[ f_{s_{1}},  f_{s_2}]_c \ldots]_c]_c : \Sigma X \rightarrow \Sigma X,
$$
such that the restrictions to the quasi Lie subalgebras $qL_{\leq r_{m_1 -1}}$, $qL_{\leq r_{m_2 -1}}$ and $qL_{\leq r_{m -1}}$, respectively, are the identity maps; that is,
$$
(1+\varphi_{[s_{l_1}, [s_{l_1 -1},\ldots,[s_{1}, s_2 ]_c \ldots]_c]_c} )_{\sharp} |_{qL_{\leq r_{m_1 -1}}}  =  1_{qL_{\leq r_{m_1 -1}}},
$$
$$
(1+\psi_{[s_{l_2}, [s_{l_2 -1},\ldots,[s_{1}, {s_2}]_c \ldots]_c]_c})_{\sharp} |_{qL_{\leq r_{m_2 -1}}}  =  1_{qL_{\leq r_{m_2 -1}}}
$$
and
$$
(1+[f_{s_l}, [ f_{s_{l-1}},\ldots,[ f_{s_{1}},  f_{s_2}]_c \ldots]_c]_c )_{\sharp} |_{qL_{\leq r_{m -1}}}  =  1_{qL_{\leq r_{m -1}}}.
$$
By using Proposition \ref{Re} and by delooping and taking the adjointness of the iterated Samelson products in Theorem \ref{thm4}, we have
$$
(1+ \varphi_{[s_{l_1}, [s_{l_1 -1},\ldots,[s_{1}, s_2 ]_c \ldots]_c]_c} )_{\sharp}(x_m)  =
x_m + \alpha \cdot  x_{[s_{l_1}, [s_{l_1 -1},\ldots,[s_{1}, s_2] \ldots]]},
$$
$$
(1+\psi_{[s_{l_2}, [s_{l_2 -1},\ldots,[s_{1}, {s_2}]_c \ldots]_c]_c})_{\sharp} (w_n)
= w_n + \beta \cdot y_{[s_{l_2}, [s_{l_2 -1},\ldots,[s_1, s_2] \ldots]]}
$$
and
$$
(1+[f_{s_l}, [ f_{s_{l-1}},\ldots,[ f_{s_{1}},  f_{s_2}]_c \ldots]_c]_c )_{\sharp} (w_s)  =  w_s + \gamma \cdot [z_{s_l}, [z_{s_{l-1}},\ldots,[z_{s_{1}}, z_{s_2}] \ldots]].
$$
We should be careful when applying the dimensions of the quasi-Lie algebra to induction; if
$$
r_{m-1} =
\begin{cases}
         2a(s_1+ s_2 + \cdots + s_{l_1 -1} ) +1 &\text{or} \\
         2a(s_1+ s_2 + \cdots + s_{l_2 -1} ) +l_2 &\text{or} \\
         2a(s_1+ s_2 + \cdots + s_{l-1} ) +l-t, \\
\end{cases}
$$
where $1\leq t \leq l-1$, then the next term $r_m$ is determined by the next dimension of the one of them depending on $a, l_1, l_2$ and $l$.
Finally, by considering the indecomposable and (pure and hybrid) decomposable generators, induction hypothesis and Theorem \ref{MM},
we finally complete the proof of the main theorem of this paper.
\end{proof}

\begin{rmk} We observe that there may be a (pure or hybrid) decomposable homotopy class of $\pi_{*}(\Sigma X)/{\rm torsion}$ ($a \geq 2$) without an indecomposable homotopy class in a certain dimension. More precisely, if $a=2$, $y_1 \in \pi_{6}(\Sigma X)/{\rm torsion}$ and $y_2 \in \pi_{10}(\Sigma X)/{\rm torsion}$, then it can be seen that $[y_1, y_2]$ is an element of $\pi_{15}(\Sigma X)/{\rm torsion}$ which is a basic Whitehead product as a rationally nonzero homotopy class, and that there is no indecomposable generator in this dimension. However, there is no difficulty in considering the mathematical induction because the epimorphism of (4-1) becomes an isomorphism because $I_{r_m} (qL) =0$.
\end{rmk}

For two nilpotent spaces, $Y$ and $Z$, the space $Y$ is said to be a {\it clone} of $Z$ if $Y$ and $Z$ have the same homotopy $n$-type for all $n$ and the localizations $Y_{(p)}$ and $Z_{(p)}$ at each prime $p$ are homotopy equivalent. The main result in this paper says that the set of all clones of $\Sigma X$ is the set consisting of the singe homotopy type.

The reader may have noticed that the $k$th suspensions $\Sigma^k X , k \geq 2$ were not described in this paper. Indeed, the homotopy self-equivalences
$1 + [{\rm commutators}] \in {\rm Aut}(\Sigma X)$
considered in this paper are not as well behaved as we might wish on the self-maps of the $k$th suspension of a given CW-complex $X$ because the group $[\Sigma^k X , \Sigma^k X]$ becomes abelian for $k \geq 2$.
As a matter of fact, the commutators are all vanishing in this abelian group.
However, it is natural for us to ask that there are many kinds of rationally nontrivial homotopy classes $[f]$ of self-maps on the abelian group  $[\Sigma^k X , \Sigma^k X], k\geq 2$ such that
$\Sigma f : [\Sigma^{k+1} X , \Sigma^{k+1} X]$ is inessential.
We end this paper with the following question:

\medskip

\noindent {\bf Question 4.3.}
Let $\Bbb G$ be a finitely generated abelian group.
Can we calculate $SNT(\Sigma^k X)$ for $k\geq 2$, where $X$ is the Eilenberg-MacLane space of type $({\Bbb G},n), n \geq 1$ or the wedge product of the Eilenberg-MacLane spaces of various types?

\bigskip


\begin{thebibliography}{100}
\bibitem{A} J. F. Adams, {\em An example in homotopy theory}, Math. Proc. Camb. Phil. Soc. 53 (1957), 922-923.
\bibitem{M} M. Arkowitz, {\em Commutators and cup products}, Illinois J. Math. 8(4) (1964), 571-581.
\bibitem{M1} M. Arkowitz, {\em The group of self-homotopy equivalences}, Lecture Notes in Math. 1425 (1990), 170-203.
\bibitem{M2} M. Arkowitz, {\em Co-H-spaces, Handbook of Algebraic Topology}, North-Holland, New York, 1995, 1143-1173.
\bibitem{MC} M. Arkowitz and C. R. Curjel, {\em Homotopy commutators of finite order (I)}, Quart. J. Math. Oxford (2) 14 (1963), 213-219.
\bibitem{ML0} M. Arkowitz and D.-W. Lee, {\em Properties of comultiplications on a wedge of spheres}, Topology Appl. 157 (2010), 1607-1621.
\bibitem{ML} M. Arkowitz and D.-W. Lee, {\em Comultiplications on a wedge of two spheres}, Sci. China Math. 54 (2011), 9-22.
\bibitem{ML1} M. Arkowitz and G. Lupton, {\em Rational co-H-spaces}, Comment. Math. Helvetici  66 (1991), 79-108.
\bibitem{BS} R. Bott and H. Samelson, {\em On the Pontryagin product in spaces of paths}, Comment. Math. Helv. 27 (1953), 320-337.
\bibitem{BK} A. K. Bousfield and D. M. Kan, {\em Homotopy limits, completions and localizations}, Lecture Notes in Math. 304, 1972.
\bibitem{CMN} F. R. Cohen, J. C. Moore and J. A. Neisendorfer, {\em Torsions in homotopy groups}, Ann. of Math.(2) 109 (1979), 121-168.
\bibitem{FHT} Y. F\'elix, S. Halperin and J.-C. Thomas, {\em Rational Homotopy Theory}, GTM 205, Springer-Verkag, New York Inc., 2001
\bibitem{G} T. Ganea,  {\em Cogroups and suspensions}, Invent. Math. 9 (1970), 185-197.
\bibitem{G} B. I. Gray, {\em Spaces on the same $n$-type for all $n$}, Topology 5 (1966), 241-243.
\bibitem{JG} J. P. C. Greenlees, {\em Rational SO(3)-equivariant cohomology theories}, Contemp. Math., 271 (2001), 99-125.
\bibitem{H} P. J. Hilton, {\em On the homotopy groups of the union of spheres}, J. Lond. Math. Soc. (2) 30 (1955), 154-172.
\bibitem{HMR} P. Hilton, G. Mislin and J. Roitberg, {\em Localization of nilpotent groups and spaces, Math. Studies $15$}, North-Holland, Amsterdam 1975.
\bibitem{L1} D.-W. Lee, {\em On the same $n$-type conjecture for the suspension of the infinite complex projective space}, Proc. Amer. Math. Soc. 137(3) (2009), 1161-1168.
\bibitem{L2} D.-W. Lee, {\em On the same $n$-type structure for the suspension of the Eilenberg-MacLane spaces},
J. Pure Appl. Algebra 214 (2010), 2027-2032.
\bibitem{L3} D.-W. Lee, {\em On the generalized same $n$-type conjecture}, Math. Proc. Camb. Phil. Soc. 157 (2014), 329-344.
\bibitem{L4} D.-W. Lee, {\em On the same $n$-types for the wedges of the Eilenberg-MacLane spaces}, Chin. Ann. Math. 37B(6) (2016), 951-962.
\bibitem{MM} C. A. McGibbon and J. M. M{\o}ller, {\em On infinite dimensional spaces that are rationally equivalent to a bouquet of spheres}, Proceedings of the 1990 Barcelona Conference on Algebraic Topology, Lecture Notes in Math. 1509, 1992, 285-293.
\bibitem{MM1} C. A. McGibbon and J. M. M{\o}ller, {\em On spaces with the same $n$-type for all $n$}, Topology 31 (1992), no. 1, 177-201.
\bibitem{Mo} K. Morisugi, {\em Projective elements in K-theory and self-maps of $\Sigma  {\mathbb C}P^\infty$}, J. Math. Kyoto Univ. 38 (1998), 151-165.
\bibitem{R} J. W. Rutter, {\em Spaces of homotoy self-equivalences}, Lecture Notes in Math. 1662 (1997).
\bibitem{S} H. Scheerer, {\em On rationalized H- and co-H-spaces with an appendix on decomposable H- and co-H-spaces}, Manuscripta Math. 51 (1984), 63-87.
\bibitem{SC} N. A. Scoville, {\em Lusternik-Schnirelmann category and the connectivity of $X$}, Algebr. Geom. Topol. 12 (2012), 435-448.
\bibitem{W} G. W. Whitehead, {\em Elements of Homotopy Theory}, GTM 61, Springer-Verlag, 1978.
\bibitem{W1} J. H. C. Whitehead, {\em Combinatorial homotopy. I.}, Bull. Amer. Math. Soc. 55 (1949), 213-245.
\bibitem{W2} J. H. C. Whitehead, {\em Combinatorial homotopy. II.}, Bull. Amer. Math. Soc. 55 (1949), 453-496.
\bibitem{WI} C. W. Wilkerson, {\em Classification of spaces of the same $n$-type for all $n$}, Proc. Amer. Math. Soc. 60 (1976), 279-285.
\end{thebibliography}
\end{document}